\begin{document}

\newtheorem{theorem}{Theorem}    
\newtheorem{proposition}[theorem]{Proposition}
\newtheorem{conjecture}[theorem]{Conjecture}
\def\theconjecture{\unskip}
\newtheorem{corollary}[theorem]{Corollary}
\newtheorem{lemma}[theorem]{Lemma}
\newtheorem{sublemma}[theorem]{Sublemma}
\newtheorem{fact}[theorem]{Fact}
\newtheorem{observation}[theorem]{Observation}
\theoremstyle{definition}
\newtheorem{definition}{Definition}
\newtheorem{notation}[definition]{Notation}
\newtheorem{remark}[definition]{Remark}
\newtheorem{question}[definition]{Question}
\newtheorem{questions}[definition]{Questions}
\newtheorem{example}[definition]{Example}
\newtheorem{problem}[definition]{Problem}
\newtheorem{exercise}[definition]{Exercise}

\numberwithin{theorem}{section}
\numberwithin{definition}{section}
\numberwithin{equation}{section}

\def\reals{{\mathbb R}}
\def\torus{{\mathbb T}}
\def\heis{{\mathbb H}}
\def\integers{{\mathbb Z}}
\def\rationals{{\mathbb Q}}
\def\naturals{{\mathbb N}}
\def\complex{{\mathbb C}\/}
\def\distance{\operatorname{distance}\,}
\def\support{\operatorname{support}\,}
\def\dist{\operatorname{dist}\,}
\def\Span{\operatorname{span}\,}
\def\degree{\operatorname{degree}\,}
\def\kernel{\operatorname{kernel}\,}
\def\dim{\operatorname{dim}\,}
\def\codim{\operatorname{codim}}
\def\trace{\operatorname{trace\,}}
\def\Span{\operatorname{span}\,}
\def\dimension{\operatorname{dimension}\,}
\def\codimension{\operatorname{codimension}\,}
\def\nullspace{\scriptk}
\def\kernel{\operatorname{Ker}}
\def\ZZ{ {\mathbb Z} }
\def\p{\partial}
\def\rp{{ ^{-1} }}
\def\Re{\operatorname{Re\,} }
\def\Im{\operatorname{Im\,} }
\def\ov{\overline}
\def\eps{\varepsilon}
\def\lt{L^2}
\def\diver{\operatorname{div}}
\def\curl{\operatorname{curl}}
\def\etta{\eta}
\newcommand{\norm}[1]{ \|  #1 \|}
\def\expect{\mathbb E}
\def\bull{$\bullet$\ }
\def\det{\operatorname{det}}
\def\Det{\operatorname{Det}}
\def\multiR{\mathbf R}
\def\bestA{\mathbf A}
\def\Apq{\mathbf A_{p,q}}
\def\Apqr{\mathbf A_{p,q,r}}
\def\rank{\mathbf r}
\def\diameter{\operatorname{diameter}}
\def\bp{\mathbf p}
\def\bff{\mathbf f}
\def\bg{\mathbf g}
\def\essd{\operatorname{essential\ diameter}}

\def\mab{\max(|A|,|B|)}
\def\t2{\tfrac12}
\def\tatb{tA+(1-t)B}

\newcommand{\abr}[1]{ \langle  #1 \rangle}

\def\doublesymm{\natural}

\newcommand{\Norm}[1]{ \Big\|  #1 \Big\| }
\newcommand{\set}[1]{ \left\{ #1 \right\} }
\def\one{{\mathbf 1}}
\newcommand{\modulo}[2]{[#1]_{#2}}

\def\scriptf{{\mathcal F}}
\def\scripts{{\mathcal S}}
\def\scriptq{{\mathcal Q}}
\def\scriptg{{\mathcal G}}
\def\scriptm{{\mathcal M}}
\def\scriptb{{\mathcal B}}
\def\scriptc{{\mathcal C}}
\def\scriptt{{\mathcal T}}
\def\scripti{{\mathcal I}}
\def\scripte{{\mathcal E}}
\def\scriptv{{\mathcal V}}
\def\scriptw{{\mathcal W}}
\def\scriptu{{\mathcal U}}
\def\scripta{{\mathcal A}}
\def\scriptr{{\mathcal R}}
\def\scripto{{\mathcal O}}
\def\scripth{{\mathcal H}}
\def\scriptd{{\mathcal D}}
\def\scriptl{{\mathcal L}}
\def\scriptn{{\mathcal N}}
\def\scriptp{{\mathcal P}}
\def\scriptk{{\mathcal K}}
\def\scriptP{{\mathcal P}}
\def\scriptj{{\mathcal J}}
\def\scriptz{{\mathcal Z}}
\def\frakv{{\mathfrak V}}
\def\frakG{{\mathfrak G}}
\def\frakA{{\mathfrak A}}
\def\frakB{{\mathfrak B}}
\def\frakC{{\mathfrak C}}

\author{Michael Christ}
\address{
        Michael Christ\\
        Department of Mathematics\\
        University of California \\
        Berkeley, CA 94720-3840, USA}
\email{mchrist@math.berkeley.edu}
\thanks{Research supported in part by NSF grant DMS-0901569.}


\date{July 19, 2012.}

\title {Near Equality In The Brunn-Minkowski Inequality} 
\begin{abstract}
A pair $(A,B)$ of subsets of $\reals^d$ which nearly realizes equality in the Brunn-Minkowski inequality, 
must nearly coincide with a homothetic pair of convex sets.
This was previously known for $d=1,2$, and is shown here to hold in all dimensions.
\end{abstract}
\maketitle

\section{Introduction}

This is another in a series of papers 
concerned with the interplay between linear structure, arithmetic progressions, sumsets,
convexity, sharp analytic inequalities, symmetry, and affine-invariant geometry of Euclidean spaces.
\cite{christradon} combines symmetry considerations with a characterization of equality
in the Riesz-Sobolev rearrangement inequality to find extremizers for an inequality
for the Radon transform.
\cite{youngdiscrete} characterizes functions which nearly realize equality in Young's convolution
inequality for torsion-free discrete groups. 
\cite{christrieszsobolev} partially characterizes cases of near equality in the Riesz-Sobolev
inequality; it turns on a connection between this problem and sumsets, and relies
on a characterization of near equality in the one-dimensional Brunn-Minkowski inequality
concerned with sizes of sumsets.
\cite{christyoungest} characterizes cases of near equality in Young's convolution equality
for Euclidean groups, using the result of \cite{christrieszsobolev} as the pivotal input
for the one-dimensional case, and a description of near-solutions of the functional equation
$f(x)+g(y)=h(x+y)$ to extend the result to higher dimensions.
\cite{christbmtwo} characterizes cases of near equality in the Brunn-Minkowski inequality
for $\reals^2$, relying on the corresponding result for $\reals^1$ and the approximate
functional equation result from \cite{christyoungest}.
The present paper is concerned with the higher-dimensional case of this same problem.

Let $A,B\subset\reals^d$ be sets.  Their sumset $A+B$ is defined to be
\[A+B=\set{a+b: a\in A \text{ and } b\in B}.\] 
The Brunn-Minkowski inequality states that for any nonempty Borel sets $A,B\subset\reals^d$, 
\begin{equation} \label{eq:BMadd} |A+B|^{1/d}\ge |A|^{1/d}+|B|^{1/d},  \end{equation}
where $|S|$ denotes the Lebesgue measure of a set $S$.
If $A,B$ have positive, finite Lebesgue measures then
equality holds if and only if there exists a homothetic pair $K,L$
of convex sets such that $A\subset K$, $B\subset L$,
$|K\setminus A|=0$, and $|L\setminus B|=0$ \cite{henstock},\cite{HO}.
There are various formulations for Lebesgue measurable sets; we restrict attention
to Borel sets merely for the sake of simplicity of statements. 

Our main result characterizes pairs of sets which realize near equality in this inequality.

\begin{theorem} \label{thm:BMadd}
Let $d\ge 1$.
For any compact subset $\Lambda\subset(0,\infty)$ 
there exists a function $(0,\infty)\owns \delta\mapsto \eps(\delta)\in(0,\infty)$
satisfying $\lim_{\delta\to 0}\eps(\delta)=0$ with the following property.
Let $A,B\subset\reals^d$ be Borel sets with  positive Lebesgue measures
satisfying $|A|/|B|\in \Lambda$.  If
\begin{equation} |A+B|^{1/d}\le |A|^{1/d}+ |B|^{1/d}+\delta\mab^{1/d} \end{equation}
then there exist a compact convex set $\scriptk\subset\reals^d$,
scalars $\alpha,\beta\in\reals^+$, and vectors $u,v\in\reals^d$ such that
\begin{equation} A \subset \alpha \scriptk+u \text{ and } 
|(\alpha\scriptk+u)\setminus A|<\eps(\delta)\mab \end{equation}
while
\begin{equation} B \subset \beta\scriptk+v \text{ and } 
|(\beta\scriptk+v)\setminus B|<\eps(\delta)\mab. \end{equation}
\end{theorem}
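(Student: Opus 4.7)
The natural approach is induction on the dimension $d$. The base cases $d=1$ and $d=2$ are in hand, so I assume the theorem in dimension $d-1\ge 2$ and analyze $A,B \subset \reals^d$ satisfying the hypothesis for small $\delta$. Slicing along the last coordinate, write $A_t = \{x'\in\reals^{d-1}:(x',t)\in A\}$ and analogously $B_s, (A+B)_r$; set $f(t)=|A_t|$, $g(s)=|B_s|$, $h(r)=|(A+B)_r|$. The $(d-1)$-dimensional Brunn--Minkowski inequality applied slicewise gives
\[ h(t+s)^{1/(d-1)} \ge f(t)^{1/(d-1)} + g(s)^{1/(d-1)}, \]
and combining this with a Borell--Brascamp--Lieb / one-dimensional Brunn--Minkowski argument for the profiles $F:=f^{1/(d-1)}$ and $G:=g^{1/(d-1)}$ recovers \eqref{eq:BMadd} in $\reals^d$. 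This is the standard Henstock--Macbeath induction scheme, and near-equality in the $d$-dimensional hypothesis propagates to near-equality at every step of this chain.

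From the saturated chain I extract three facts. First, for $t$ in a set of nearly full relevant measure, the pair $(A_t, B_{\sigma(t)})$, with $\sigma$ the optimal matching between the supports of $F$ and $G$, nearly saturates the $(d-1)$-dimensional Brunn--Minkowski inequality; the inductive hypothesis then provides scalars $\alpha_t,\beta_t$, translates $u_t,v_t$, and convex bodies $\scriptk_t\subset\reals^{d-1}$ such that $A_t$ and $B_{\sigma(t)}$ are well approximated by $\alpha_t\scriptk_t+u_t$ and $\beta_t\scriptk_t+v_t$. Second, $F$ and $G$ are close to concave functions supported on intervals of nearly the right length. Third, the matching $\sigma$ and the center maps $t\mapsto u_t$, $t\mapsto v_{\sigma(t)}$ are approximately affine; this is where the approximate functional equation result of \cite{christyoungest} enters, since under addition of slices the centers must nearly add.

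The main obstacle is rigidifying the $t$-dependent shapes $\scriptk_t$ to a single $\scriptk_0\subset\reals^{d-1}$. A priori the inductive hypothesis leaves $\scriptk_t$ free to vary with $t$, while the conclusion of the theorem requires one common shape. The key observation is that for $|t-t'|$ small, $A_t$ and $A_{t'}$ must have small symmetric difference after suitable rescaling: otherwise a Fubini-type computation produces mass in $A+B$ beyond what near-equality allows. This forces $\scriptk_t$ and $\scriptk_{t'}$ to be nearly homothetic, and a compactness argument on homothety classes of compact convex bodies replaces the family by a single $\scriptk_0$. Combining $\scriptk_0$ with the approximately affine data $\alpha_t,\beta_t,u_t,v_t$ assembles a $d$-dimensional convex body $\scriptk\subset\reals^d$ whose horizontal cross sections reproduce $A$ and $B$ up to the stated error.

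The function $\delta\mapsto\eps(\delta)$ is best obtained nonconstructively: suppose a sequence $(A_n,B_n,\delta_n)$ with $\delta_n\to 0$ violates the conclusion, and after normalizing by $|A_n|^{1/d}$ and translating appropriately extract a limit pair; the limit must realize exact equality, and the characterization of \cite{henstock,HO} yields a contradiction with the assumed failure of approximation. This style is already used in \cite{christbmtwo}, and combining it with the dimensional induction above gives the theorem. I expect the hardest technical point to be the rigidity step: the quantitative stability of the homothety class $\scriptk_t$ as a function of $t$ is not immediate from the inductive hypothesis and appears to require genuinely new input, either a uniform version of the $(d-1)$-dimensional theorem or a careful exploitation of how slices of $A$ and of $B$ overlap as the slicing parameter varies.
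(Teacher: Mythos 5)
Your proposal takes a genuinely different route from the paper, and it contains a gap that the paper was specifically designed to avoid. You slice along the last coordinate so that fibers $A_t$ are $(d-1)$-dimensional and try to apply the $(d-1)$-dimensional theorem slicewise, then rigidify the resulting $t$-dependent family of convex bodies $\scriptk_t$ into a single shape. You identify this rigidification as the hardest step but do not actually carry it out; the proposed mechanism (a Fubini-type overcount in $|A+B|$ when shapes drift) is plausible-sounding but vague, and it is exactly here that your argument stalls. The paper dodges this issue entirely by slicing in the orthogonal direction: fibers $A_x$ with $x\in\reals^{d-1}$ are one-dimensional, and the $(d-1)$-dimensional induction hypothesis is applied not to fibers but to the superlevel sets $\scripta(s)=\{x:|A_x|>s\}$. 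These superlevel sets are nested in $s$, so there is no family of independently-varying $(d-1)$-dimensional shapes to rigidify; the convex body in $\reals^d$ is then assembled from one convex set in $\reals^{d-1}$ (the near-limit of $\scripta(\bar s)$), the near-interval structure of the one-dimensional fibers, and near-affine dependence of fiber centers (via the approximate functional equation from \cite{christyoungest}).

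Two further concrete gaps. First, the paper emphasizes that the naive inductive scheme (which you invoke under the name Henstock--Macbeath) does not extend directly beyond $d=2$, because the $(d-1)$-dimensional Brunn--Minkowski inequality carries the exponent $1/(d-1)$ and hence is not linear in measures; near-equality for a pair of superlevel sets with very different measures does not force the right conclusion. The paper's main new ingredient is the partial symmetrization $A^\doublesymm=(A^\star)^*$: it preserves the distribution function of $x\mapsto|A_x|$, obeys $|A^\doublesymm+B^\doublesymm|\le|A+B|$, and is precompact in $L^2$ (Rellich), from which one extracts that the distribution functions of $|A_x|$ and $|B_x|$ are nearly equal. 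This is what makes $|\scripta(s)|\approx|\scriptb(s)|$ and renders the power $1/(d-1)$ innocuous when the $(d-1)$-dimensional induction hypothesis is applied. Your proposal has no counterpart to this and leaves the inhomogeneity issue unaddressed. Second, your nonconstructive extraction of $\eps(\delta)$ by passing to a limit of a sequence $(A_n,B_n)$ does not work for arbitrary Borel sets: indicator functions of measurable sets have no useful compactness. The compactness in the paper is applied either to the symmetrized sets $A_\nu^\doublesymm$ (which are genuinely precompact) or, in the final step, after the horizontal/vertical/affine structure has been pinned down. As written, your compactness step is not justified.
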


The Brunn-Minkowski inequality is nontrivial even for convex sets, and was first formulated 
in that restricted context, but the main issue here is to establish that arbitrary sets satisfying
the hypothesis are nearly convex. Our analysis sheds little or no light on the problem of 
finding a quantitative upper bound on the distance from a pair of convex sets satisfying the hypothesis, 
to a pair of homothetic convex sets, in terms of $\delta$.

The case $d=2$ was treated in an earlier paper \cite{christbmtwo}, with which the present work
has much in common. Each involves a
proof by induction on the dimension, regarding $\reals^d$ as $\reals^{d-1}\times\reals^1$ and
using the theorem for both factors $\reals^{d-1}$ and $\reals^1$ to treat $\reals^d$.
When $d=2$, both factors become $\reals^1$, and the Brunn-Minkowski equality for each factor takes
the simpler form $|t\scripta+(1-t)\scriptb|\ge t|\scripta|+(1-t)|\scriptb|$ for $t\in(0,1)$. 
The proof given in \cite{christbmtwo} for $d=2$ relies on this simplification, and does not seem
to extend directly to higher dimensions.

The main new element in the present work is the use of symmetrization.
To a pair $(A,B)$ we associate equimeasurable partial symmetrizations $(A^\doublesymm,B^\doublesymm)$;
these are partial symmetrizations in the sense that they are in general more symmetric
than $A,B$ but are not necessarily ellipsoids.
These have several key properties:
(i) $|A^\star+B^\star| \le |A+B|$; 
(ii) $A^\doublesymm,B^\doublesymm$ have sufficient structure that near equality in the 
Brunn-Minkowski inequality for them is readily shown to imply the conclusion of our theorem;
(iii) because the symmetrization is merely partial, $A^\doublesymm,B^\doublesymm$
retain sufficient information that their near coincidence with convex sets implies useful information
about $A,B$ themselves. Consequently, the induction hypothesis in $\reals^{d-1}$
is applied only to pairs of sets $(\scripta,\scriptb)$ known to have nearly equal measures, 
so that the distinction between $t|\scripta|+(1-t)|\scriptb|$ 
and $(t|\scripta|^{1/(d-1)}+(1-t)|\scriptb|^{1/(d-1)})^{d-1}$ becomes inconsequential.
The symmetrization $A^\doublesymm$ was used by Burchard \cite{burchard} to characterize
equality in the Riesz-Sobolev rearrangement inequality, and has also been employed in other aspects
of symmetrization theory \cite{liebloss}.

\begin{notation}
Throughout the analysis, we identify $\reals^d$ with $\reals^{d-1}\times\reals^{1}$. 
$|S|$ denotes the Lebesgue measure of a subset $S$ of $\reals^{d-1}$ of $\reals^d$, or of $\reals^1$, 
or of a product of two Euclidean spaces, according to context. 
If $A\subset\reals^d$, then for $x\in\reals^1$, $A_x$ denotes the set
\[A_x=\set{y\in\reals^{d-1}: (x,y)\in A}\]
while
\[ \scripta(s)=\set{x\in\reals^{d-1}: |A_x|>s};  \]
more precisely, the set of all $x$ for which $A_x$ is Lebesgue measurable
and $|A_x|>s$.
The projection $\pi:\reals^d\to\reals^{d-1}$ is the mapping $(x,y)\mapsto x$.

$o_\delta(1)$ denotes any quantity which tends to zero as $\delta\to 0$. 
Such a quantity is permitted to depend on the dimension $d$
and on the set $\Lambda$ in the statement of Theorem~\ref{thm:BMadd}.
$\alpha\asymp \beta $ means that $\alpha,\beta$ are positive quantities
whose ratio is bounded above and below by finite, positive constants
which depend only on $d,\Lambda$.

Three symmetrizations of a set $A\subset\reals^d$ will be employed.
$A^*$ denotes the Schwarz symmetrization; that is, $\set{x': (x',x_d)\in A}$ 
is symmetrized as a subset of $\reals^{d-1}$ for each $x_d\in\reals^1$.
$A^\star$ denotes the Steiner symmetrization; that is, $\set{x_d: (x',x_d)\in A}$ 
is symmetrized as a subset of $\reals^{1}$ for each $x'\in\reals^{d-1}$.
$A^*$ and $A^\star$ are defined more precisely below.
Lastly, $A^\doublesymm = (A^\star)^*$ is the set obtained by applying first Steiner symmetrization to $A$,
then Schwarz symmetrization to $A^\star$.

A sumset $A+B$ is by definition empty if either summand is empty.
The Brunn-Minkowski inequality in the form $|A+B|^{1/d}\ge |A|^{1/d}+|B|^{1/d}$
does not hold true if one summand is empty and the other has positive measure. 
\end{notation}

\section{Preliminaries on symmetrization}
To any measurable set $A\subset\reals^d$ of finite measure are associated
the following three types of symmetrizations $A^*,A^\star,A^\doublesymm\subset\reals^d$.
\begin{definition} 
(i) The Schwarz symmetrization $A^*\subset\reals^d$ of $A$ is defined to satisfy:
For each $t\in\reals^1$, 
if
$\set{x\in\reals^{d-1}: (x,t)\in A}$
is measurable with positive $d-1$--dimensional Lebesgue measure then
$\set{x\in\reals^{d-1}: (x,t)\in A^*}$
is an open disk centered at $0\in\reals^{d-1}$, whose $(d-1)$--dimensional measure
is equal to the $(d-1)$--dimensional measure of $\set{x\in\reals^{d-1}: (x,t)\in A}$.
Otherwise 
$\set{x\in\reals^{d-1}: (x,t)\in A^*}$ is empty.

\noindent (ii) The Steiner symmetrization $A^\star\subset\reals^d$ of $A$ is defined to satisfy:
For $x\in\reals^{d-1}$, $A^\star_x=\emptyset$ if $|A_x|=0$. If $A_x$ is Lebesgue measurable and $|A_x|>0$,
then $A^\star_x$ equals the open interval of length equal to $|A_x|$, centered at $0\in\reals^1$.
Otherwise $A^\star_x=\emptyset$.

\noindent (iii) $A^\doublesymm = (A^\star)^*$.
\end{definition}

In particular, $|A^*|=|A^\star|=|A^\doublesymm|=|A|$.
Sumsets of these symmetrizations satisfy useful inequalities.
\begin{lemma} Let $d\ge 2$.
For any Borel sets $A,B\subset\reals^d$ with finite Lebesgue measures,
\begin{align*} &|A^*+B^*|\le |A+B|
\\ &|A^\star +B^\star |\le |A+B|
\\ & |A^\doublesymm+B^\doublesymm|\le |A+B|.  
\end{align*} \end{lemma}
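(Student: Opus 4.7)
The plan is to establish the three inequalities in the stated order: first the Steiner bound via fiberwise application of the one-dimensional Brunn-Minkowski inequality, then the Schwarz bound via slicewise application of the Brunn-Minkowski inequality in $\reals^{d-1}$, and finally the double-symmetrization bound by composing the first two.

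For the Steiner bound, write $\reals^d=\reals^{d-1}\times\reals^1$ and for $w\in\reals^{d-1}$ let $\phi_A(w)$ denote the Lebesgue measure of the vertical fiber $\set{t\in\reals^1:(w,t)\in A}$, with $\phi_B$ defined analogously. By definition, the vertical fibers of $A^\star$ and $B^\star$ over $w$ are open intervals centered at $0$ of lengths $\phi_A(w)$ and $\phi_B(w)$ respectively. For $w=w_1+w_2$, the fiber of $A^\star+B^\star$ over $w$ is the union over all decompositions of the Minkowski sum of two such centered intervals; each such Minkowski sum is itself an interval centered at $0$ of length $\phi_A(w_1)+\phi_B(w_2)$, and a union of open intervals all containing $0$ is itself an interval centered at $0$. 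Hence this fiber has length exactly $\sup_{w_1+w_2=w}(\phi_A(w_1)+\phi_B(w_2))$. On the other hand, the fiber of $A+B$ over $w$ contains the sumset of the fibers of $A$ and $B$ over any $w_1,w_2$ with $w_1+w_2=w$, and the one-dimensional Brunn-Minkowski inequality bounds the length of this sumset below by $\phi_A(w_1)+\phi_B(w_2)$ whenever both fibers are nonempty. Taking the supremum over decompositions and integrating in $w$ by Fubini yields $|A^\star+B^\star|\le|A+B|$.

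The Schwarz bound is argued transversely. For $t\in\reals^1$ let $A_t=\set{y\in\reals^{d-1}:(y,t)\in A}$. For each $s\in\reals^1$, the slice $(A+B)_s\subset\reals^{d-1}$ contains $A_{t_1}+B_{t_2}$ for every $t_1+t_2=s$, and the Brunn-Minkowski inequality in $\reals^{d-1}$ bounds the measure of this Minkowski sum below by $(|A_{t_1}|^{1/(d-1)}+|B_{t_2}|^{1/(d-1)})^{d-1}$. Meanwhile $(A^*)_{t_1}$ and $(B^*)_{t_2}$ are open balls in $\reals^{d-1}$ centered at $0$, so their Minkowski sum is an open ball centered at $0$ whose radius is the sum of the two radii and whose measure is exactly $(|A_{t_1}|^{1/(d-1)}+|B_{t_2}|^{1/(d-1)})^{d-1}$. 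Since a union of concentric open balls is itself an open ball, $(A^*+B^*)_s$ is a centered open ball of measure $\sup_{t_1+t_2=s}(|A_{t_1}|^{1/(d-1)}+|B_{t_2}|^{1/(d-1)})^{d-1}$. Comparing with the lower bound and integrating in $s$ gives $|A^*+B^*|\le|A+B|$. The doublesymm bound then follows by composition:
\[ |A^\doublesymm+B^\doublesymm|=|(A^\star)^*+(B^\star)^*|\le|A^\star+B^\star|\le|A+B|. \]

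No step poses a serious obstacle. The only routine technicalities are the measurability of $w\mapsto\phi_A(w)$ and $t\mapsto|A_t|$ (which follows from Fubini), the legitimacy of replacing measures of $(A+B)_s$ by the supremum (which is automatic since every individual lower bound survives the sup), and the verification that the fiber and slice descriptions above exhaust, rather than are merely contained in, those of $A^\star+B^\star$ and $A^*+B^*$ --- which is immediate from the product form of the definition of Minkowski sum.
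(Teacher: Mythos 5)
Your proof is correct and follows essentially the same approach as the paper: fiberwise application of one-dimensional Brunn-Minkowski for the Steiner inequality, the analogous slicewise argument in $\reals^{d-1}$ for the Schwarz inequality (which the paper dispatches with "proved in the same way"), and the $\doublesymm$ bound by composing the first two.
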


\begin{proof} 
The third conclusion is an immediate consequence of the first two.
Consider the second inequality. For any $x\in\reals^{d-1}$,
and for any $x',x''\in\reals^{d-1}$,
\begin{equation} \text{If $x'+x''=x$ then
$A_{x'}+B_{x''}\subset (A+B)_x$.}\end{equation} 

Let $x',x''\in\reals^{d-1}$, and consider $x=x'+x''$.
$A^\star_{x'},B^\star_{x''}$ are subintervals of $\reals^1$ centered at $0$, and 
$(A^\star+B^\star)_x$ contains the sum of these two intervals,
which is itself an interval centered at $0$.
Moreover,
\begin{equation} (A^\star+B^\star)_x  = \bigcup_{x'+x''=x}\  (A^\star_{x'}+B^\star_{x''}), \end{equation}
where the union is taken only over pairs $(x',x'')$ for which both sets $A_{x'},B_{x''}$ are nonempty.
Now $A^\star_{x'}+B^\star_{x''}$ is an interval centered at $0$,
whose measure equals $|A^\star_{x'}|+|B^\star_{x''}|$. Therefore
\begin{equation} |(A^\star+B^\star)_x|  
= \sup_{x'+x''=x}\  |A^\star_{x'}|+|B^\star_{x''}|
= \sup_{x'+x''=x}\  |A_{x'}|+|B_{x''}| \end{equation}
By the one-dimensional Brunn-Minkowski inequality,
\begin{equation} |A_{x'}|+|B_{x''}| \le |A_{x'}+B_{x''}|.  \end{equation}
Therefore
\begin{equation} |(A^\star+B^\star)_x|  \le  \sup_{x'+x''=x}\  |A_{x'}+B_{x''}|.  \end{equation}
Since $(A+B)_x\supset A_{x'}+B_{x''}$, we find that
\begin{equation} |(A^\star+B^\star)_x|  \le  (A+B)_x \end{equation}
for all $x\in\reals^{d-1}$.
Therefore by Fubini's theorem, $|A^\star+B^\star| \le  |A+B|$.

The first inequality is proved in the same way.
\end{proof}

To any Borel set $A\subset\reals^d$ is associated the function $\reals^{d-1}\owns x\mapsto |A_x|$.
Superlevel sets of this function for $A^\doublesymm$, and 
superlevel sets of the corresponding function for $A$, satisfy the following relationship.
\begin{lemma}
Let $A\subset\reals^d$ be a measurable set with finite, positive Lebesgue measure.
Then for almost every $\lambda>0$,
\begin{equation} |\set{x\in\reals^{d-1}: |A^\doublesymm_x|>\lambda}| 
= |\set{x\in\reals^{d-1}: |A_x|>\lambda}|.  \end{equation}
\end{lemma}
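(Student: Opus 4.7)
The plan is a direct computation of the distribution function of $x\mapsto|A^\doublesymm_x|$, exploiting the explicit rotational and reflectional structure of $A^\doublesymm$. Set $G(u):=|\scripta(u)|$, which is a non-increasing, right-continuous function of $u\ge 0$.

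First I unwind the definition $A^\doublesymm=(A^\star)^*$ one symmetrization at a time. Let $B=A^\star$. By the definition of Steiner symmetrization, for each $y\in\reals^{d-1}$ the fiber $B_y$ is the open interval of length $|A_y|$ centered at $0\in\reals^1$; equivalently, for each $t\in\reals^1$ the horizontal slice $\{y\in\reals^{d-1}:(y,t)\in B\}$ equals $\scripta(2|t|)$ and has $(d-1)$-measure $G(2|t|)$. Applying Schwarz symmetrization to $B$ then replaces this horizontal slice by the open disk in $\reals^{d-1}$ centered at $0$ of the same $(d-1)$-measure. Since $x$ lies in the disk of $(d-1)$-measure $m$ centered at $0$ precisely when $c_{d-1}|x|^{d-1}<m$, where $c_{d-1}$ denotes the volume of the unit ball in $\reals^{d-1}$, this yields
\[A^\doublesymm_x=\{t\in\reals^1:G(2|t|)>\phi(x)\},\qquad\phi(x):=c_{d-1}|x|^{d-1}.\]
Because $G$ is non-increasing, this set is a symmetric interval about $0\in\reals^1$, and its length equals $u^*(\phi(x))$, where $u^*(v):=\sup\{u\ge 0:G(u)>v\}$.

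Finally I compute the distribution function of $|A^\doublesymm_x|$. Monotonicity together with right-continuity of $G$ yields the equivalence $u^*(v)>\lambda\iff G(\lambda)>v$, valid for all $\lambda\ge 0$ outside the countable set of discontinuities of $G$. Taking $v=\phi(x)$ identifies $\{x\in\reals^{d-1}:|A^\doublesymm_x|>\lambda\}$, up to a null boundary, with the open ball $\{x:\phi(x)<G(\lambda)\}$, whose $(d-1)$-measure is exactly $G(\lambda)=|\scripta(\lambda)|=|\{x:|A_x|>\lambda\}|$. The only delicate point is verifying the monotone-function equivalence above, which is routine and automatically accounts for the ``almost every $\lambda$'' qualifier in the conclusion.
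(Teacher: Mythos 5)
Your proof is correct, and it takes essentially the same approach as the paper: both rest on the observation that $A^\doublesymm=(A^\star)^*$ has an explicit monotone structure which makes the superlevel sets of $x\mapsto|A^\doublesymm_x|$ directly computable from the distribution function $G$. You carry this out via the explicit formula $|A^\doublesymm_x|=u^*(\phi(x))$ together with the generalized-inverse equivalence $u^*(v)>\lambda\iff G(\lambda)>v$, whereas the paper observes that $A^\doublesymm$ and $A^\star$ are both vertically symmetric so that the superlevel sets of $x\mapsto|A^\doublesymm_x|$ and $x\mapsto|A^\star_x|=|A_x|$ coincide (a.e.\ $\lambda$) with horizontal slices at height $\lambda/2$, whose measures agree by the defining slice-preservation property of Schwarz symmetrization---two slightly different ways of bookkeeping the same idea.
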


\begin{proof} 
We will write $A^\doublesymm_x$ as shorthand for $(A^\doublesymm)_x$.
For $s\in\reals^1$ consider $F(s) = |\set{x: (x,s)\in A^\doublesymm}|$.
This equals
$|{y: (y,s)\in A^\star}|$, because $A^\doublesymm  =(A^\star)^*$.
Since $A^\star=\set{(x,s)\in\reals^{d-1}\times\reals^1: |s|<\tfrac12|A_x|}$,
$F$ is an even function, which is nonincreasing on $[0,\infty)$.
Therefore up to a null set, 
\[A^\doublesymm=\set{(x,s)\in\reals^{d-1}\times\reals^1: |s|<\tfrac12 |A^\doublesymm_x|}\]
and for almost every $\lambda$,
\[ \set{x: |A^\doublesymm_x|>\lambda} = \set{x: (x,\tfrac12\lambda)\in A^\doublesymm}.  \]
On the other hand,
\[A^\star=\set{(x,s)\in\reals^{d-1}\times\reals^1: |s|<\tfrac12|A_x|},\]
so for almost every $\lambda$,
\[ \set{x: |A^\star_x|>\lambda} = \set{x: (x,\tfrac12\lambda)\in A^\star}.  \]
By definition of Schwarz symmetrization, the slices $\set{x: (x,r)\in A^\doublesymm}$
and $\set{x: (x,r)\in A^\star}$ have equal Lebesgue measures for almost every $r$.
\end{proof}

\begin{corollary} \label{cor:equalprojectionmeasures}
Let $\pi:\reals^d\to\reals^{d-1}$ be the projection $\pi(x',x_d)=x'$.
If $A\subset\reals^d$ is a nonempty open set then
\begin{equation} |\pi(A)| = |\pi(A^\doublesymm)|.  \end{equation}
\end{corollary}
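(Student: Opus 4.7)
The plan is to show both $|\pi(A)|$ and $|\pi(A^\doublesymm)|$ equal $|\{x\in\reals^{d-1}:|A_x|>0\}|$, via two short and essentially independent computations. The openness of $A$ is needed only for the first step; the second uses only the explicit slicewise construction of $A^\doublesymm$.

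First I would verify that $\pi(A)=\{x:|A_x|>0\}$. Because $A$ is open, each slice $A_x=\{t\in\reals^1:(x,t)\in A\}$ is open in $\reals^1$, so $A_x\neq\emptyset$ if and only if $|A_x|>0$. Hence $\pi(A)=\{x\in\reals^{d-1}:|A_x|>0\}$ as sets, and in particular $|\pi(A)|=|\{x:|A_x|>0\}|$.

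Next I would unwind $A^\doublesymm=(A^\star)^*$ at each height $t\in\reals^1$. By the Steiner definition, $\{x\in\reals^{d-1}:(x,t)\in A^\star\}=\{x:|A_x|>2|t|\}$, and Schwarz symmetrization then replaces this set by the open disk $D(t)\subset\reals^{d-1}$ centered at $0$ of equal $(d-1)$-dimensional measure. The function $|t|\mapsto|D(t)|$ is nonincreasing in $|t|$, and since all $D(t)$ are concentric disks centered at $0$, they are nested: $D(t_2)\subseteq D(t_1)$ whenever $|t_1|\le|t_2|$. Hence $\pi(A^\doublesymm)=\bigcup_t D(t)=D(0)$, and $|\pi(A^\doublesymm)|=|D(0)|=|\{x:|A_x|>0\}|$.

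Comparing the two computations gives the corollary. I do not anticipate any significant obstacle; the main point is that the Schwarz construction produces literal (not merely almost-everywhere) disks, and their nestedness (guaranteed by first Steinerizing $A$, which makes the height slices of $A^\star$ monotone in $|t|$) makes $\pi(A^\doublesymm)$ coincide with the $t=0$ disk. One point worth noting: the preceding lemma is closely related but not strictly needed here; the present argument uses directly the slicewise structure of $A^\doublesymm$ that appears in its proof.
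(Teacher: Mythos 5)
Your proof is correct. Let me compare it with the paper's.

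The paper's own proof is a one-liner: it observes that $\pi(A)=\{x:|A_x|>0\}$ for open $A$, and otherwise leans on the immediately preceding Lemma~2.3, which shows that the distribution functions of $x\mapsto|A_x|$ and $x\mapsto|A^\doublesymm_x|$ agree for almost every level $\lambda>0$; letting $\lambda\to 0^+$ then equates $|\{x:|A_x|>0\}|$ with $|\{x:|A^\doublesymm_x|>0\}|$, and the latter coincides up to a null set with $|\pi(A^\doublesymm)|$. You bypass Lemma~2.3 altogether and instead compute $\pi(A^\doublesymm)$ explicitly from the slicewise construction: since the horizontal slices $\{x:(x,t)\in A^\doublesymm\}$ are concentric open disks $D(t)$ nested downward in $|t|$, the projection $\bigcup_t D(t)$ is exactly the largest disk $D(0)$, whose measure is by construction $|\{x:|A_x|>0\}|=|\pi(A)|$. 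Your route is more self-contained and yields the set identity $\pi(A^\doublesymm)=D(0)$ exactly (not merely up to a null set), at the modest cost of repeating in a special case the slicewise bookkeeping that Lemma~2.3 already packaged. Both arguments are sound; you correctly observe that first Steinerizing makes the Schwarz disks nested, which is the one structural fact that makes the direct computation work.
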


This follows because for any open set $A$, $\pi(A) = \set{x\in\reals^{d-1}: |A_x|>0}$.

\section{On projections} 
Let $d\ge 2$.
Continue to denote by $\pi:\reals^d\to\reals^{d-1}$ the mapping $\pi(x',x_d)=x'$.
\begin{lemma} Let $d\ge 2$.
For any nonempty Borel sets $A,B\subset\reals^d$  and any $t\in(0,1)$,
\begin{equation} \sup_{x\in\reals^{d-1}}|A_x| \cdot |\pi(B)| \le t^{-1}(1-t)^{-(d-1)}|tA+(1-t)B|.
\end{equation} \end{lemma}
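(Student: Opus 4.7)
The plan is to exhibit a large subset of $\tatb$ by fixing an (almost) extremal fiber $A_{x_0}$ of $A$ and sweeping the resulting translate of $tA_{x_0}$ over the dilated projection $(1-t)\pi(B)$.

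Fix any $x_0\in\pi(A)$ and write $\ell:=|A_{x_0}|$; it suffices to prove $t\ell(1-t)^{d-1}|\pi(B)|\le|\tatb|$ and then take the supremum over $x_0$. Since $\{x_0\}\times A_{x_0}\subset A$, the inclusion
\begin{equation*}
t\bigl(\{x_0\}\times A_{x_0}\bigr)+(1-t)B\ \subset\ \tatb
\end{equation*}
holds. For each $x''\in\pi(B)$, the fiber of the set on the left above the point $x:=tx_0+(1-t)x''$ is the one-dimensional set $tA_{x_0}+(1-t)B_{x''}$, which for any single $b\in B_{x''}$ contains the translate $tA_{x_0}+(1-t)b$, and therefore has one-dimensional Lebesgue measure at least $t\ell$.

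As $x''$ ranges over $\pi(B)$, the base point $x=tx_0+(1-t)x''$ ranges over $tx_0+(1-t)\pi(B)$, whose $(d-1)$-dimensional Lebesgue measure equals $(1-t)^{d-1}|\pi(B)|$. Fubini's theorem (applied to a characteristic function of the left-hand set above, or directly to $\tatb$) therefore gives
\begin{equation*}
|\tatb|\ \ge\ \int_{tx_0+(1-t)\pi(B)}\bigl|(\tatb)_x\bigr|\,dx\ \ge\ (1-t)^{d-1}|\pi(B)|\cdot t\ell,
\end{equation*}
which is the required bound.

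No step presents a substantive obstacle. The only subtleties are routine measurability points: the projection $\pi(B)$ and the sumset $\tatb$ are analytic and hence Lebesgue measurable, and one may if desired inner-approximate $A$ and $B$ by compact subsets to reduce to the classical Fubini setting. Note also that the argument does not invoke the one-dimensional Brunn--Minkowski inequality in any nontrivial way; all of the structure on the fiber side comes from the observation that $tA_{x_0}+(1-t)B_{x''}$ contains a translate of $tA_{x_0}$.
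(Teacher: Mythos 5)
Your proof is correct and follows essentially the same route as the paper: fix a fiber $A_{x_0}$, observe that for each $x''\in\pi(B)$ the fiber of $tA+(1-t)B$ over $tx_0+(1-t)x''$ has one-dimensional measure at least $t|A_{x_0}|$, and integrate over $tx_0+(1-t)\pi(B)$ via Fubini. The only (minor) difference is that the paper obtains the fiber bound from the one-dimensional Brunn--Minkowski inequality $|tA_{x_0}+(1-t)B_{x''}|\ge t|A_{x_0}|+(1-t)|B_{x''}|$, whereas you observe more elementarily that the fiber contains a translate of $tA_{x_0}$; both yield the identical estimate.
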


\begin{proof}
For any $x\in\reals^{d-1}$,
$tA+(1-t)B$ contains the set of all $(tx+(1-t)y,s)$ such that $y\in\pi(B)$ and $s\in tA_x+(1-t)B_y$.
Now $|tA_x+(1-t)B_y|\ge t|A_x|+(1-t)|B_y|\ge t|A_x|$ by the one-dimensional Brunn-Minkowski ineqality.
Therefore 
\[|tA+(1-t)B| \ge t|A_x|(1-t)^{d-1}|\pi(B)|.\]
\end{proof}

The same conclusion holds, with the roles of $A,B$ reversed. 
Now suppose that $|A|,|B|\ge\tfrac12$ and $|tA+(1-t)B|\le 2$.
Since $|\pi(A)|\cdot \sup_x|A_x|\ge |A|$, and $|\pi(B)|\cdot \sup_x|B_x|\ge |B|$, 
it then follows that
\[\frac{\sup_x |A_x|}{\sup_y |B_y|} \in[\gamma,\gamma^{-1}] \]
for some $\gamma\in(0,1]$ which depends only on $d$, and likewise
\[\frac{|\pi(A)|}{|\pi(B)|} \in[\gamma,\gamma^{-1}].\]

\begin{corollary} \label{cor:normalize}
Let $d\ge 2$, and let $\Lambda\subset(0,1)$ be a compact set.
There exists $\gamma<\infty$, depending only on $d,\Lambda$, with the following property.
Let $t\in\Lambda$, and let $A,B\subset\reals^d$ be Borel sets satisfying
$|A|\ge \tfrac12$, $|B|\ge \tfrac12$, and $|tA+(1-t)B|\le 2$.
Then there exists a measure-preserving linear automorphism $\phi$ of $\reals^d$
such that the images $\phi(A),\phi(B)$ satisfy
\begin{align*} \sup_{x\in\reals^{d-1}} |\phi(A)_x|&\le\gamma,
\\ \sup_{x\in\reals^{d-1}} |\phi(B)_x|&\le\gamma,
\\ |\pi(\phi(A))|&\le\gamma,
\\ |\pi(\phi(B))|&\le\gamma.  \end{align*}
\end{corollary}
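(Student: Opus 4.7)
The plan is to introduce the four geometric quantities
\[a=\sup_{x\in\reals^{d-1}}|A_x|,\quad b=\sup_{x\in\reals^{d-1}}|B_x|,\quad p=|\pi(A)|,\quad q=|\pi(B)|,\]
establish two-sided bounds relating them, and then use a single diagonal scaling of $\reals^{d-1}\times\reals^1$ as the automorphism $\phi$.

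First I would apply the previous lemma: since $t\in\Lambda\subset(0,1)$ is compact, the factor $t^{-1}(1-t)^{-(d-1)}$ is bounded by a constant depending only on $d,\Lambda$, and the hypothesis $|tA+(1-t)B|\le 2$ yields $aq\le C$. The analogous bound with the roles of $A,B$ (and $t,1-t$) interchanged gives $bp\le C$. On the other hand, Fubini's theorem trivially gives $ap\ge|A|\ge\tfrac12$ and $bq\ge|B|\ge\tfrac12$. Combining $aq\le C$ with $bq\ge\tfrac12$ forces $a/b\le 2C$, and combining $bp\le C$ with $ap\ge\tfrac12$ forces $b/a\le 2C$; hence $a\asymp b$. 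A symmetric argument yields $p\asymp q$. Together these imply that all four products $ap,aq,bp,bq$ are bounded above by a constant depending only on $d,\Lambda$.

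Next I would define the measure-preserving linear automorphism $\phi$ by
\[\phi(x',x_d)=\bigl(\nu^{-1/(d-1)}x',\ \nu x_d\bigr),\qquad \nu=\sqrt{p/a},\]
so that $\phi$ preserves Lebesgue measure (since the product of the scaling factors in all $d$ directions is $1$). Under this map one computes directly that
\[\sup_x|\phi(A)_x|=\nu a=\sqrt{ap},\qquad |\pi(\phi(A))|=\nu^{-1}p=\sqrt{ap},\]
both bounded by a constant depending only on $d,\Lambda$ by the previous step. Using $a\asymp b$ and $p\asymp q$, the same bounds then follow for $\phi(B)$: $\nu b\le (b/a)\nu a$ and $\nu^{-1}q\le (q/p)\nu^{-1}p$.

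There is no real obstacle here; the only minor point to be careful about is verifying that the constants in $a\asymp b$, $p\asymp q$ depend only on $d$ and $\Lambda$ (not on $t$ separately or on the sets), which follows from the compactness of $\Lambda\subset(0,1)$ forcing $t^{-1}(1-t)^{-(d-1)}$ and $(1-t)^{-1}t^{-(d-1)}$ to be uniformly bounded. The choice of $\nu$ as the geometric mean $(p/a)^{1/2}$ is what balances the two quantities $\sup_x|\phi(A)_x|$ and $|\pi(\phi(A))|$ to the common value $\sqrt{ap}$; any other choice within the correct order of magnitude would work equally well.
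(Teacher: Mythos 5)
Your proposal is correct and follows essentially the same route as the paper: the comparability bounds $a\asymp b$, $p\asymp q$ (which the paper records in the discussion just before the corollary) combined with a single measure-preserving diagonal scaling of $\reals^{d-1}\times\reals^1$. The only cosmetic difference is the choice of scaling parameter — you balance $\sup_x|\phi(A)_x|$ and $|\pi(\phi(A))|$ at the geometric mean $\sqrt{ap}$, while the paper chooses $r$ with $r^{d-1}|\pi(A)|=1$; either normalization works given the established two-sided bounds.
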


\begin{proof} It suffices to consider $\phi(x,t) = (rx, r^{-(d-1)}t)$ where $r>0$ is chosen so that
$r^{d-1}|\pi(A)|=1$.  \end{proof}

\begin{definition}
Let $d\ge 2$ and $1\le \gamma<\infty$.  Any Borel set $A\subset\reals^d$ which satisfies 
\begin{equation} \left\{ \begin{aligned}
|A|&\ge\gamma^{-1},
\\ \sup_{x\in\reals^{d-1}} |A_x|&\le\gamma
\\ |\pi(A)| &\le\gamma
\end{aligned} \right.  \end{equation}
is said to be $\gamma$--normalized.
\end{definition}

These conditions imply also that $|A|$ is bounded above by a finite quantity depending only on $d,\gamma$.
In these terms, Corollary~\ref{cor:normalize} can be restated as follows.
\begin{corollary} \label{cor:normalize2}
Let $d\ge 2$, and let $\Lambda\subset(0,1)$ be a compact set.
There exists $\gamma<\infty$, depending only on $d,\Lambda$, with the following property.
For any $t\in\Lambda$, and for any pair of Borel sets $A,B\subset\reals^d$ sets satisfying
$|A|\ge \tfrac12$, $|B|\ge \tfrac12$, and $|tA+(1-t)B|\le 2$,
there exists a measure-preserving invertible linear automorphism $\phi$ of $\reals^d$
such that the sets $\phi(A),\phi(B)$  are $\gamma$--normalized.
\end{corollary}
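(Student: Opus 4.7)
Since Corollary~\ref{cor:normalize2} is presented as a restatement of Corollary~\ref{cor:normalize}, my plan is simply to apply the earlier corollary and then verify that its conclusion is the same as the three conditions defining $\gamma$--normalization.

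First, I would apply Corollary~\ref{cor:normalize} to the pair $(A,B)$ with the given $t\in\Lambda$. This yields a constant $\gamma_0 = \gamma_0(d,\Lambda) < \infty$ and a measure-preserving linear automorphism $\phi$ of $\reals^d$ for which
\[
\sup_{x\in\reals^{d-1}} |\phi(A)_x|\le \gamma_0, \quad \sup_{x\in\reals^{d-1}} |\phi(B)_x|\le \gamma_0, \quad |\pi(\phi(A))|\le\gamma_0, \quad |\pi(\phi(B))|\le\gamma_0.
\]
These are exactly the upper-bound conditions on slice measures and projection measures required by the definition of $\gamma_0$--normalization, applied simultaneously to $\phi(A)$ and to $\phi(B)$. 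Invertibility of $\phi$ is built into the explicit construction $\phi(x',x_d)=(rx',r^{-(d-1)}x_d)$ used in the proof of Corollary~\ref{cor:normalize}.

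The only piece missing from the definition of $\gamma$--normalization is the lower bound $|A|\ge \gamma^{-1}$. Because $\phi$ is measure-preserving, $|\phi(A)| = |A| \ge \tfrac12$ and $|\phi(B)| = |B| \ge \tfrac12$, directly from the hypotheses. Setting $\gamma := \max(\gamma_0,2)$ then gives $|\phi(A)|,|\phi(B)| \ge \gamma^{-1}$ while preserving the upper bounds already established (enlarging $\gamma$ only weakens them). Both $\phi(A)$ and $\phi(B)$ are therefore $\gamma$--normalized, with $\gamma$ depending only on $d$ and $\Lambda$.

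There is no real obstacle here: the substantive content of the corollary was already proved in Corollary~\ref{cor:normalize}, and the only point to check is that the new definition packages the same inequalities, with the lower bound on total measure supplied for free by measure-preservation of $\phi$ and by choosing $\gamma$ at least $2$.
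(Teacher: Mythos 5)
Your proposal is correct and matches the paper's intent: the paper explicitly introduces Corollary~\ref{cor:normalize2} as a restatement of Corollary~\ref{cor:normalize} in the language of $\gamma$--normalization, and the only new ingredient — the lower bound $|\phi(A)|,|\phi(B)|\ge\gamma^{-1}$ — follows exactly as you say, from measure-preservation of $\phi$ together with $|A|,|B|\ge\tfrac12$ and the harmless enlargement $\gamma:=\max(\gamma_0,2)$.
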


Recall the notation $\scripta(s)=\set{x\in\reals^{d-1}: |A_x|>s}$, for $A\subset\reals^d$.
$\scripta(s)\subset\pi(A)$, and if $s,t>0$ then $\scripta(s+t)\subset\scripta(s)$. 
\begin{lemma} \label{lemma:leveladjustment}
For any Borel set $A\subset\reals^d$, For any $r,t\in(0,\infty)$,
\begin{equation} \big|\set{s\in\reals^+: |\scripta(s)|-|\scripta(s+t)|>r} \big| \le tr^{-1}|\pi(A)|.
\end{equation} \end{lemma}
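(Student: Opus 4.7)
The plan is to reduce the statement to a one-line application of Markov's inequality to the nonnegative function $s\mapsto |\scripta(s)|-|\scripta(s+t)|$, after observing that the telescoping integral of this function is controlled by $t\,|\pi(A)|$.

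Set $f(s)=|\scripta(s)|$ for $s>0$. Since $\scripta(s)\subset\pi(A)$ for every $s>0$, we have $f(s)\le|\pi(A)|$; and since $\scripta(s+t)\subset\scripta(s)$ by the second observation preceding the lemma statement, $f$ is nonincreasing, so $f(s)-f(s+t)\ge 0$ everywhere. We may assume $|\pi(A)|<\infty$, as otherwise the asserted bound is trivial.

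Next I carry out the telescoping calculation
\[
\int_0^\infty \bigl(f(s)-f(s+t)\bigr)\,ds
= \int_0^\infty f(s)\,ds - \int_t^\infty f(s)\,ds
= \int_0^t f(s)\,ds
\le t\,|\pi(A)|,
\]
where the change of variables in the second integral is justified because $f$ is bounded and the integrands are nonnegative (so the difference of integrals makes sense whether or not each term individually is finite; one can cut off at a large level $N$, use the identity on $[0,N]$, and let $N\to\infty$).

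Markov's inequality applied to the nonnegative function $s\mapsto f(s)-f(s+t)$ on $\reals^+$ then yields
\[
\bigl|\{s\in\reals^+ : f(s)-f(s+t)>r\}\bigr|
\le \frac{1}{r}\int_0^\infty \bigl(f(s)-f(s+t)\bigr)\,ds
\le \frac{t\,|\pi(A)|}{r},
\]
which is the desired inequality. There is no real obstacle here; the only point requiring a brief justification is the telescoping identity, which is routine because $f$ is monotone and the integrand is nonnegative.
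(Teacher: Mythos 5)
Your proof is correct and follows essentially the same route as the paper: both telescope the integral of $s\mapsto|\scripta(s)|-|\scripta(s+t)|$ to $\int_0^t|\scripta(s)|\,ds\le t|\pi(A)|$ and then apply Chebyshev's (Markov's) inequality. You simply spell out the monotonicity and truncation details that the paper leaves implicit.
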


\begin{proof}
\begin{equation} \int_0^\infty  \Big(|\scripta(s)|-|\scripta(s+t)| \Big)\,ds
= \int_0^t |\scripta(s)|\,ds\le \int_0^t|\pi(A)|\,dx.  \end{equation}
The conclusion follows from this bound by Chebyshev's inequality.  \end{proof}

\section{Properties of special sets and their sums}

\begin{lemma} \label{lemma:equalize1}
Let $d\ge 2$.  Let $0<c<C<\infty$ and $R<\infty$.
Let $\scriptf$ be the set of all Borel measurable subsets $A\subset B(0,R)\subset \reals^d$ 
which satisfy $c\le |A|\le C$. Then $\set{\one_{A^\doublesymm}: A\in\scriptf}$ is a precompact subset
of $\lt(\reals^d)$.
\end{lemma}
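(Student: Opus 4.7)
The key observation is that $A^\doublesymm=(A^\star)^*$ has a very rigid form. Each horizontal slice $\set{x'\in\reals^{d-1}:(x',s)\in A^\doublesymm}$ is, by the definition of Schwarz symmetrization, an open ball in $\reals^{d-1}$ centered at the origin. Writing $\phi_A(s)$ for the $(d-1)$--dimensional measure of this slice, the set $A^\doublesymm$ is determined (up to a null set) by the single function $\phi_A:\reals\to[0,\infty)$. Since $A^\star=\set{(x',t):|t|<\tfrac12|A_{x'}|}$, one finds that $\phi_A(s)=|\scripta(2|s|)|$. It follows immediately that $\phi_A$ is even, nonincreasing on $[0,\infty)$, and supported in $[-R,R]$ (because $A\subset B(0,R)$ forces $|A_{x'}|\le 2R$); furthermore $\phi_A$ is uniformly bounded above by a constant depending only on $d$ and $R$.

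The second step is to convert the $L^2$ distance between indicators into an $L^1$ distance between profiles. For any $A,B\in\scriptf$, Fubini's theorem gives
\[ \norm{\one_{A^\doublesymm}-\one_{B^\doublesymm}}_{L^2(\reals^d)}^2 = |A^\doublesymm\triangle B^\doublesymm| = \int_\reals |\phi_A(s)-\phi_B(s)|\,ds, \]
because at each height $s$ the slices $(A^\doublesymm)_s$ and $(B^\doublesymm)_s$ are concentric open balls in $\reals^{d-1}$, so the $(d-1)$--dimensional measure of their symmetric difference equals the absolute difference of their measures.

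It thus suffices to show that $\set{\phi_A:A\in\scriptf}$ is precompact in $L^1(\reals)$. This is a family of nonnegative, even functions, each nonincreasing on $[0,\infty)$, supported in the common interval $[-R,R]$, and uniformly bounded. Helly's selection theorem, applied on $[0,R]$ and then extended by evenness, allows one to extract from any sequence $\phi_{A_n}$ a subsequence converging pointwise almost everywhere; the uniform bound together with the bounded common support then upgrades this to $L^1$ convergence via the dominated convergence theorem.

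The only step requiring genuine verification is the first: one must unwind the composition of Steiner and Schwarz symmetrization to identify the slice structure of $A^\doublesymm$. This is essentially a restatement of the previous lemma, and is not a substantive difficulty. The lower bound $|A|\ge c$ plays no role in the argument (if $|A|\to 0$, the profiles simply tend to zero in $L^1$).
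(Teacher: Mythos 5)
Your proof is correct, and it takes a genuinely different route from the paper. The paper establishes a uniform bound $\norm{\one_{A^\doublesymm}}_{H^s}\le C'$ for some fixed $s\in(0,\tfrac12)$, using the decay of Fourier transforms of indicators of balls, and then invokes Rellich's lemma on the common compact support $B(0,R)$. You instead observe that $A^\doublesymm$ is radial in $x'$ and symmetric-decreasing in $x_d$, so it is completely encoded by the one-variable profile $\phi_A(s)=|\scripta(2|s|)|$; the identity $(\one_{A^\doublesymm}-\one_{B^\doublesymm})^2=\one_{A^\doublesymm\triangle B^\doublesymm}$ combined with Fubini and the concentric-ball structure of the slices then gives the exact isometry $\norm{\one_{A^\doublesymm}-\one_{B^\doublesymm}}_{L^2}^2=\norm{\phi_A-\phi_B}_{L^1(\reals)}$, and precompactness of the profile family follows from Helly's selection theorem plus dominated convergence (uniform bound $\phi_A\le c_{d-1}R^{d-1}$, common support $[-R,R]$). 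Your argument is more elementary, avoiding fractional Sobolev spaces and Rellich entirely, and makes transparent exactly what the double symmetrization buys: the whole $d$-dimensional set collapses to a monotone function of one real variable. The paper's Sobolev--Rellich route is shorter to state and perhaps more robust to variants where the slices are not literal balls, but for the lemma at hand your reduction is cleaner. Your observation that the lower bound $|A|\ge c$ is not needed for precompactness is also correct (it merely excludes $0$ from the closure).
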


\begin{proof}
Fix any exponent $s\in (0,\tfrac12)$.
It is straightforward to show that if $f=\one_{A^\doublesymm}$ with $A\in\scriptf$,
then $\norm{f}_{H^s}$ is finite, and is majorized by a finite constant which
depends only on $d,c,C,R,s$. The proof uses for instance the fact that if
$B\subset\reals^{d-1}$ is any ball of finite radius, then $|\widehat{\one_B}(\xi)|\le
C'(1+|\xi|)^{-(d+1)/2}$ for all $\xi\in\reals^{d-1}$,
where $C'$ is a finite constant provided that the radius of $B$ is bounded above.

Whenever $A$ is contained in a ball $B(0,R)$ centered at the origin, 
Schwarz and Steiner symmetrization produce
sets $A^*,A^\star$ which are subsets of the same ball $B(0,R)$.
Therefore likewise $A^\doublesymm\subset B(0,R)$, so Rellich's lemma guarantees precompactness.
\end{proof}

\begin{lemma}
Let $d\ge 2$, let $\Lambda\subset(0,1)$ be compact, and let $\gamma<\infty$.
Let $(A_\nu,B_\nu)$ be a sequence of pairs of Borel measurable subsets of $\reals^d$,
and let $t_\nu$ be a  convergent sequence of elements of $\Lambda$.
Suppose that $|A_\nu|\to 1$, $|B_\nu|\to 1$, and $|t_\nu A_\nu+(1-t_\nu)B_\nu|\to 1$ as $\nu\to\infty$. 
Suppose further that $A_\nu,B_\nu$ are $\gamma$--normalized for all indices $\nu$.

Suppose finally that both sequences $\one_{A_\nu^\doublesymm},\one_{B_\nu^\doublesymm}$ are convergent
in $\lt(\reals^d)$. Then there exists a $\gamma$--normalized convex set $\scriptc\subset\reals^d$
satisfying $\scriptc=\scriptc^\doublesymm$ such that
\begin{equation} \lim_{\nu\to\infty} |A_\nu^\doublesymm\bigtriangleup \scriptc|
=\lim_{\nu\to\infty} |B_\nu^\doublesymm\bigtriangleup \scriptc|=0.  \end{equation}
\end{lemma}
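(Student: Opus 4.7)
The plan is to pass to the $\lt$ limits, bound the measure of the limit sumset from above using convolutions, apply the classical equality case of Brunn-Minkowski, and eliminate the translation ambiguity via the rigidity of the $\doublesymm$-symmetry group. Let $E,F\subset\reals^d$ satisfy $\one_{A_\nu^\doublesymm}\to\one_E$ and $\one_{B_\nu^\doublesymm}\to\one_F$ in $\lt$, so $|E|=|F|=1$; passing to a subsequence with a.e.\ convergence, the slicewise conditions defining the $\doublesymm$-structure (each $x$-slice a symmetric interval about $0\in\reals^1$ of length a nonincreasing function of $|x|$) are inherited in the limit, whence $E=E^\doublesymm$ and $F=F^\doublesymm$ up to null sets. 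By $\gamma$-normalization, all of $A_\nu^\doublesymm$, $B_\nu^\doublesymm$, $E$, $F$ lie in a fixed ball $B(0,R)\subset\reals^d$, and, after a further subsequence, $t_\nu\to t\in\Lambda$.

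The main step is the upper bound $|tE+(1-t)F|\le 1$. Consider the convolutions $f_\nu=\one_{tA_\nu^\doublesymm}\ast\one_{(1-t)B_\nu^\doublesymm}$ and $f=\one_{tE}\ast\one_{(1-t)F}$. Because the factors are uniformly compactly supported and converge in $\lt(\reals^d)$, $\|f_\nu-f\|_{L^\infty(\reals^d)}\to 0$. For any $\eps>0$ and all sufficiently large $\nu$, $\set{f>\eps}\subset\set{f_\nu>\eps/2}\subset tA_\nu^\doublesymm+(1-t)B_\nu^\doublesymm$, so
\[
|\set{f>\eps}|\le|t_\nu A_\nu^\doublesymm+(1-t_\nu)B_\nu^\doublesymm|+o(1)\le 1+o(1);
\]
letting $\eps\to 0$ gives $|\set{f>0}|\le 1$. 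Because $E,F$ have the $\doublesymm$-structure (subgraphs of a nonincreasing radial profile, whose boundaries are null), $tE+(1-t)F$ agrees with $\set{f>0}$ up to a null set, so $|tE+(1-t)F|\le 1$.

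Brunn-Minkowski gives $|tE+(1-t)F|^{1/d}\ge t|E|^{1/d}+(1-t)|F|^{1/d}=1$, forcing equality. The classical characterization then says that $tE$ and $(1-t)F$ agree up to null sets with a homothetic pair of convex sets; the measure ratio fixes the homothety ratio as $t/(1-t)$, so $E$ and $F$ are translates of a common convex set: $F=E-v$. The $\doublesymm$-symmetries eliminate $v$: invariance of both $E$ and $F=E-v$ under $(x,x_d)\mapsto(x,-x_d)$ forces $E$ to be invariant under the translation $(0,2v_d)$, impossible for a bounded set unless $v_d=0$; rotational invariance in $x$ similarly forces $v'=0$. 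Hence $E=F$; set $\scriptc:=E=F$, a convex set with $\scriptc=\scriptc^\doublesymm$ and $|\scriptc|=1$.

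The $\lt$-convergence of the full sequences yields $|A_\nu^\doublesymm\bigtriangleup\scriptc|\to 0$ and $|B_\nu^\doublesymm\bigtriangleup\scriptc|\to 0$ (every subsequence produces the same $\scriptc$, so the full sequences converge), and $\scriptc$ is $\gamma$-normalized by passing the bounds $|A_\nu^\doublesymm|\ge\gamma^{-1}$, $\sup_x|(A_\nu^\doublesymm)_x|\le\gamma$, and $|\pi(A_\nu^\doublesymm)|\le\gamma$---inherited from $A_\nu$ via the preservation of the distribution function $x\mapsto|A_x|$ under $\doublesymm$ and Corollary~\ref{cor:equalprojectionmeasures}---to the limit. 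The principal obstacle is the upper bound in the second paragraph: Hausdorff convergence of the sumsets produces only \emph{lower} semicontinuity of the limit-set measure, so the convolutional $L^\infty$-identification of $\set{f>0}$ with the interior of $tE+(1-t)F$, together with the $\doublesymm$-structure needed to control the boundary, is essential.
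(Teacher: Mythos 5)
Your argument follows the same broad skeleton as the paper's: pass to $\lt$ limits $E,F$, show the pair realizes exact equality in Brunn--Minkowski, invoke the Hadwiger--Ohmann characterization, and then use the rigidity of the $\doublesymm$-symmetries (invariance under $x_d\mapsto -x_d$ and rotations in $x'$) to kill the translation vector. The one place you diverge is the middle step: the paper simply applies Lemma~14.1 of \cite{christbmtwo}, which already delivers limiting sets $A_\infty,B_\infty$ with $|tA_\infty+(1-t)B_\infty|=1$, whereas you re-derive the crucial upper semicontinuity $|tE+(1-t)F|\le 1$ from scratch via Young's inequality: uniform convergence of $\one_{tA_\nu^\doublesymm}*\one_{(1-t)B_\nu^\doublesymm}$ in $L^\infty$, the inclusion $\{f_\nu>0\}\subset t A_\nu^\doublesymm+(1-t)B_\nu^\doublesymm$, and monotone convergence as $\eps\to 0$. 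That is a clean, self-contained substitute, and you correctly identify why it is needed and why it is delicate (the sumset measure is not in general upper semicontinuous under $L^p$ convergence of indicators; the $\doublesymm$-structure lets you pass to an open subgraph representative of $E,F$ so that $\{f>0\}$ genuinely exhausts $tE+(1-t)F$ up to a null set). What the paper's route buys is brevity, at the cost of importing an external lemma; what yours buys is a self-contained proof that exploits the extra regularity of $\doublesymm$-sets, which the general Lemma~14.1 does not assume.

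Two small points that need tightening if you were to write this out in full. First, you define $f_\nu$ with the fixed $t$ but then compare to $|t_\nu A_\nu^\doublesymm+(1-t_\nu)B_\nu^\doublesymm|$; there is no simple monotonicity between the two sumsets, so you should instead set $f_\nu=\one_{t_\nu A_\nu^\doublesymm}*\one_{(1-t_\nu)B_\nu^\doublesymm}$ (the $L^\infty$ convergence to $f=\one_{tE}*\one_{(1-t)F}$ still holds since $t_\nu\to t$ and $E,F$ have null boundaries). Second, the statement ``$E=E^\doublesymm$ up to null sets'' deserves a sentence; the quickest route is to note that $A_\nu^\doublesymm$ is the region under a nonincreasing radial profile $h_\nu$, that $\lt$-convergence of the indicators forces $L^1$-convergence of the profiles along a subsequence, that an a.e.\ limit of nonincreasing functions is nonincreasing, and that one then chooses the open-subgraph representative of $E$ so that $tE+(1-t)F$ is open and equals $\{f>0\}$. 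Neither point changes the substance of the argument.
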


\begin{proof}
\[
|A_\nu|^t|B_\nu|^{1-t}
=|A_\nu^\doublesymm|^t|B_\nu^\doublesymm|^{1-t} 
\le|tA_\nu^\doublesymm+(1-t)B_\nu^\doublesymm|
\le |tA_\nu+(1-t)B_\nu|\to 1\]
as $\nu\to\infty$, so 
$|tA_\nu^\doublesymm+(1-t)B_\nu^\doublesymm|\to 1$.

Let $t\in(0,1)$.
It is shown in Lemma~14.1 of \cite{christbmtwo} 
that for any sequence of pairs of sets $(\tilde A_\nu,\tilde B_\nu)$ which satisfy all of the conditions 
\begin{equation*}
\left\{
\begin{aligned}
&|\tilde A_\nu|\to 1 \ \text{ and } \  |\tilde B_\nu|\to 1 
\ \text{ as } \ \nu\to\infty
\\
&\text{The sequences}\ (\one_{\tilde A_\nu}) \ \text{ and } \  (\one_{\tilde B_\nu}) 
\ \text{ converge in $\lt(\reals^d)$ as } \ \nu\to\infty
\\
&|t\tilde A_\nu+(1-t)\tilde B_\nu|\to 1
\ \text{ as } \ \nu\to\infty,
\end{aligned} \right.
\end{equation*}
there exist limiting sets $A_\infty,B_\infty$ such that 
$|\tilde A_\nu \bigtriangleup A_\infty|\to 0$,
$|\tilde B_\nu \bigtriangleup B_\infty|\to 0$, $|A_\infty|=|B_\infty|=1$,
and $|tA_\infty+(1-t)B_\infty|=1$.
That same proof demonstrates that under the more general hypotheses of this lemma,
the same conclusion holds with $t=\lim_{\nu\to\infty} t_\nu$.

Apply this with $(\tilde A_\nu,\tilde B_\nu)=(A^\doublesymm_\nu,B^\doublesymm_\nu)$.
Since $A_\nu^\doublesymm = (A_\nu^\doublesymm)^\doublesymm$, $A_\infty=A_\infty^\doublesymm$. 
Likewise $B_\infty=B_\infty^\doublesymm$.

By the known characterization of equality in the Brunn-Minkowski inequality \cite{HO},
there exist an open convex set $\scriptc\subset\reals^d$ and a vector $z\in\reals^d$ 
such that $A_\infty\bigtriangleup\scriptc$ and
and $B_\infty\bigtriangleup(\scriptc+z)$ are null sets.
The relation $|A_\infty\bigtriangleup A_\infty^\doublesymm|=0$ then forces 
$|\scriptc^\doublesymm\bigtriangleup\scriptc|=0$,
and the relation $|B_\infty\bigtriangleup B_\infty^\doublesymm|=0$ likewise forces 
$|(\scriptc+z)^\doublesymm\bigtriangleup (\scriptc+z)|=0$, which in turn forces $z=0$.
\end{proof}

It follows quite easily that  the conclusion holds with $z$ replaced by $0$,
but we will not need that fact.

\begin{lemma} \label{lemma:convexquadratictail}
Let $d\ge 1$ and $C_0<\infty$.
There exists $C>0$ with the following property.
Consider any convex set $\scriptc\subset\reals^d$ satisfying $|\scriptc|=1$
such that $\scriptc=\scriptc^\doublesymm$, and $|\pi(\scriptc)|\le C_0$.
Let $\scriptc(\lambda)=\set{x\in\reals^{d-1}:|\scriptc_x|>\lambda}$.
Then  for every $\eps>0$,
\begin{equation} |\scriptc\setminus\pi^{-1}(\scriptc(\eps))|\le C\eps^2.  \end{equation}
\end{lemma}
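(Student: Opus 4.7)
The plan is to exploit the rigid structure forced on $\scriptc$ by the hypothesis $\scriptc=\scriptc^\doublesymm$, combined with convexity, to reduce the problem to a one-variable estimate for a concave function. Writing $f(x')=|\scriptc_{x'}|$ for $x'\in\reals^{d-1}$, the Steiner part of the double symmetrization forces each vertical slice $\scriptc_{x'}$ to be the centered open interval $(-f(x')/2,f(x')/2)$, while the Schwarz part forces each horizontal slice $\{x'\in\reals^{d-1}:(x',t)\in\scriptc\}=\{f>2|t|\}$ to be an open ball centered at the origin. Consequently $f$ is radially symmetric and nonincreasing in $|x'|$, so $f(x')=g(|x'|)$ for a nonincreasing function $g$ on some $[0,R]$, with $\pi(\scriptc)=B(0,R)\subset\reals^{d-1}$; passing to closures, one may take $g$ continuous and $g(R)=0$.

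Convexity of $\scriptc$ then forces $f$ to be concave on $\pi(\scriptc)$: this is the one-dimensional Brunn-Minkowski inequality applied to the slices, using $\scriptc_{tx'+(1-t)y'}\supset t\scriptc_{x'}+(1-t)\scriptc_{y'}$. Restricting $f$ to a radial line gives concavity of $g$ on $[0,R]$. Also $|\pi(\scriptc)|\le C_0$ bounds $R$ in terms of $d$ and $C_0$.

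For the main estimate I would treat $d\ge 2$ (the case $d=1$ is immediate). Set $M=g(0)$. The crude bound $1=|\scriptc|=\int g(|x'|)\,dx'\le M\,|\pi(\scriptc)|\le MC_0$ gives $M\ge C_0^{-1}$. For $0<\eps<M$ let $r_\eps\in(0,R)$ be the point with $g(r_\eps)=\eps$; concavity of $g$ together with the endpoint values $g(0)=M$ and $g(R)=0$ places the graph above the chord, so $g(r)\ge M(1-r/R)$, and evaluating at $r_\eps$ yields $R-r_\eps\le R\eps/M$. Since $g$ is nonincreasing, $\pi(\scriptc)\setminus\scriptc(\eps)=\{x':r_\eps\le|x'|\le R\}$, whose measure equals $|\pi(\scriptc)|\bigl(1-(r_\eps/R)^{d-1}\bigr)\le(d-1)|\pi(\scriptc)|(R-r_\eps)/R\le(d-1)C_0^2\eps$, using the elementary inequality $1-t^{d-1}\le(d-1)(1-t)$ for $t\in[0,1]$. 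Combining with the pointwise bound $f\le\eps$ on this annulus,
\[|\scriptc\setminus\pi^{-1}(\scriptc(\eps))|=\int_{\{f\le\eps\}}f(x')\,dx'\le\eps\cdot|\{f\le\eps\}\cap\pi(\scriptc)|\le(d-1)C_0^2\eps^2,\]
so one may take $C=(d-1)C_0^2$; the case $\eps\ge M$ is absorbed into this constant since then $|\scriptc\setminus\pi^{-1}(\scriptc(\eps))|\le|\scriptc|=1\le C_0^2M^{-2}\eps^2\le C\eps^2$.

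No step is particularly delicate: the role of $\scriptc=\scriptc^\doublesymm$ is to collapse the question to a one-variable statement about the concave profile $g$ near its zero, and the quadratic dependence on $\eps$ is produced directly by the linear chord bound that concavity forces near that zero.
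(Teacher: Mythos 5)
Your proof is correct and takes essentially the same route as the paper's: reduce to the radial concave profile (the paper writes ``convex'' for $h$ but then uses the concave chord bound $h\ge H$, so that is evidently a typo), bound the measure of the annulus $\{0<f\le\eps\}$ by a constant times $\eps$ via the chord joining the endpoint values of the profile, and multiply by the pointwise height bound $\eps$. The only cosmetic difference is that you obtain the lower bound on the maximal height from $1\le M|\pi(\scriptc)|\le MC_0$ rather than the paper's $1\le c_d\sigma^d h(0)$, which makes your constants slightly more explicit.
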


\begin{proof}
$\scriptc$ takes the form $\scriptc=\set{(x,t)\in\reals^{d-1}\times\reals^1: |t|<h(|x|)}$
where $h:[0,\infty) \to[0,\infty)$ is a nonincreasing function which is convex
on the interval $\set{s\in[0,\infty): h(x)>0}$.
Since $|\pi(\scriptc)|\le C_0$, this interval is of the form $[0,\sigma)$ where $\sigma\le\sigma_0(C_0,d)<\infty$.
Since $\scriptc$ is convex but has finite measure, $h(0)$ must be finite.
On the other hand, $1=|\scriptc|\le c_d \sigma^d h(0)$, so $h(0)$ is bounded below by a positive constant
$h_0(C_0,d)$ which depends only on $d,C_0$. Since $x\mapsto h(|x|)$ is convex, $h$ is bounded below 
on the interval $[0,\sigma]$ by the affine function $H$ which satisfies $H(0)=h(0)$ and $H(\sigma)=0$.
Therefore 
\begin{equation*} \scriptc\setminus\pi^{-1}(\scriptc(\eps)) =\set{(x,t)\in \scriptc:  h(|x|)<\tfrac12 \eps}
\subset \set{(x,t)\in\reals^{d-1}\times[-\tfrac12 \eps,\tfrac12 \eps]:  
H(|x|)<\tfrac12 \eps}.  \end{equation*}
Since $|dH/ds|$ is bounded below by a positive constant which depends only on $C_0,d$,
\[ |\set{x \in\reals^{d-1}:  H(|x|)<\tfrac12\eps}| \le C\eps.  \]
\end{proof}

\section{On the function $x\mapsto |A_x|$ for near extremizers}

Let $d\ge 2$. Let $\Lambda$ be a compact subset of $(0,1)$, and let $t\in\Lambda$.
Let $A,B\subset\reals^d$ be Borel sets which satisfy 
\begin{equation} \label{hypos} \left\{
\begin{aligned} & \big|\,|A|-1\,\big| < \delta\ \text{ and } \big|\,|B|-1\,\big| < \delta 
\\ & |tA+(1-t)B|<1+\delta.  \end{aligned} \right. \end{equation}
These will be our standing hypotheses for the remainder of the proof.
As was shown in \cite{christbmtwo}, in order to establish Theorem~\ref{thm:BMadd},
it suffices to show that under these hypotheses, there exist a convex set $\scriptc\subset\reals^d$
and a vector $v\in\reals^d$ such that $A\subset\scriptc$, $B+v\subset\scriptc$,
and $|\scriptc|<1+o_\delta(1)$, where the quantity denoted by $o_\delta(1)$ depends only
on $\Lambda,d,\delta$, and tends to zero as $\delta\to 0$ so long as $\Lambda,d$ remain fixed. 

For any measure-preserving linear automorphism $\phi$ of $\reals^d$,
$|\phi(A)|=|A|$, $|\phi(B)|=|B|$, and \[|t\phi(A)+(1-t)\phi(B)| = |\phi(tA+(1-t)B| = |tA+(1-t)B|.\]
Thus if a pair of sets $(A,B)$ satisfies the hypotheses \eqref{hypos}, so does $(\phi(A),\phi(B))$.
Corollary~\ref{cor:normalize2}  asserts that if $A,B$ satisfy these hypotheses then
there exist $\phi$ such that $\phi(A),\phi(B)$ are $\gamma$--normalized,
where $\gamma<\infty$ depends only on $d,\Lambda$.
We will write simply ``normalized'' to mean $\gamma$--normalized with some such constant $\gamma$.
We therefore may, and will, assume henceforth that $A,B$ are normalized.

\begin{lemma} \label{lemma:squareerror}
For any dimension $d\ge 2$,
there exist a constant $C<\infty$ and a function $\eps(\delta)$
satisfying $\lim_{\delta\to 0}\eps(\delta)=0$ with the following properties.
Let $\delta>0$ be sufficiently small.  For any normalized Borel sets
$A,B\subset\reals^d$ satisfying \eqref{hypos}, for all $\eta\ge\eps(\delta)$,  
\begin{equation} \label{quadraticinverseprojectionbounds} \begin{aligned}
& |A\setminus\pi^{-1}(\scripta(\eta))| \le C\eta^2
\\ & |B\setminus\pi^{-1}(\scriptb(\eta))| \le C\eta^2.
\end{aligned} \end{equation}
Moreover, for all $s>0$,
\begin{equation} \label{nearlysamedistfns} \begin{aligned} 
&|\scripta(s)|\le |\scriptb(s-\eps(\delta))| + \eps(\delta) 
\\ &|\scriptb(s)|\le |\scripta(s-\eps(\delta))| + \eps(\delta).
\end{aligned} \end{equation}
\end{lemma}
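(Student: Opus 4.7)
My plan is to pass to the $\doublesymm$-symmetrized pair $(A^\doublesymm, B^\doublesymm)$ and then exploit the convex limit furnished by Lemma~4.2 (the lemma immediately following Lemma~\ref{lemma:equalize1}). By Lemma~2.3 the distribution of $x \mapsto |A_x|$ coincides with that of $x \mapsto |A^\doublesymm_x|$, so both the integral $|A \setminus \pi^{-1}(\scripta(\eta))| = \int_{|A_x| \le \eta} |A_x|\,dx$ and the function $s \mapsto |\scripta(s)|$ are unchanged when $A$ is replaced by $A^\doublesymm$, and likewise for $B$. Moreover by Lemma~2.2 the pair $(A^\doublesymm, B^\doublesymm)$ still satisfies \eqref{hypos}. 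I may therefore assume throughout that the slice-length functions $f(x) = |A^\doublesymm_x|$ and $k(x) = |B^\doublesymm_x|$ are radial nonincreasing and that each individual slice is an interval centered at $0$.

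Combining the precompactness in Lemma~\ref{lemma:equalize1} with Lemma~4.2 by the standard compactness-and-contradiction argument yields: uniformly over all normalized pairs $(A, B)$ satisfying \eqref{hypos}, there exists a $\gamma$-normalized convex set $\scriptc = \scriptc^\doublesymm$ with $|A^\doublesymm \bigtriangleup \scriptc| + |B^\doublesymm \bigtriangleup \scriptc| = o_\delta(1)$. Since the slices of $A^\doublesymm$, $B^\doublesymm$, and $\scriptc$ are intervals centered at $0$, this translates into $\|f - g\|_{L^1(\reals^{d-1})} + \|k - g\|_{L^1(\reals^{d-1})} = o_\delta(1)$, where $g(x) = |\scriptc_x|$. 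I set $\eps(\delta) = o_\delta(1)^{1/2}$, normalized so that $\|f - g\|_{L^1}, \|k - g\|_{L^1} \le \eps(\delta)^2$.

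For the quadratic bound \eqref{quadraticinverseprojectionbounds}, let $R$ denote the radius of the disk $\pi(\scriptc)$ and split
\[
|A \setminus \pi^{-1}(\scripta(\eta))| \;=\; \int_{\{f \le \eta\}} f\,dx \;\le\; \int_{\{|x| \le R,\ f \le \eta\}} f\,dx + \int_{\{|x| > R\}} f\,dx.
\]
The last integral equals $\int_{\{|x|>R\}} |f-g|\,dx \le \|f-g\|_{L^1} \le \eta^2$ for $\eta \ge \eps(\delta)$, since $g$ vanishes off $\pi(\scriptc)$. For the first summand, the elementary observation that $f \le \eta$ and $|f - g| \le \eta$ together force $g \le 2\eta$ gives the inclusion
\[
\{|x| \le R:\ f \le \eta\} \;\subset\; \bigl(\pi(\scriptc) \setminus \scriptc(2\eta)\bigr) \cup \{|f-g|>\eta\},
\]
whose $(d-1)$-dimensional measures are $\le C\eta$ (the proof of Lemma~\ref{lemma:convexquadratictail} actually establishes this stronger linear bound on $|\pi(\scriptc)\setminus\scriptc(2\eta)|$ before squaring to obtain the stated conclusion) and $\le \eta^{-1}\|f-g\|_{L^1} \le \eta$ by Chebyshev. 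Multiplying by $\eta$ yields the desired $O(\eta^2)$ bound. The comparison \eqref{nearlysamedistfns} is analogous and easier: $\|f-k\|_{L^1} \le \|f-g\|_{L^1} + \|g-k\|_{L^1} = o_\delta(1)$, and the layer inclusion $\{f>s\} \subset \{k>s-\eps(\delta)\} \cup \{f-k>\eps(\delta)\}$ combined with Chebyshev closes the argument after slightly enlarging $\eps(\delta)$ if needed.

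The main obstacle throughout is that $|\pi(A^\doublesymm)|$ is not directly controlled by $|A^\doublesymm \bigtriangleup \scriptc|$, so any naive pointwise comparison of the distribution functions of $f$ and $g$ must fail near $\partial\pi(\scriptc)$. The above argument sidesteps this by routing the uncontrolled ``excess projection'' mass through the $d$-dimensional integral $\int_{|x|>R} f\,dx$, which is directly dominated by $\|f-g\|_{L^1}$ and hence absorbed into $o_\delta(1)$.
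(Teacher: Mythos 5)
Your proof is correct and follows essentially the same route as the paper: reduce to $A=A^\doublesymm$, $B=B^\doublesymm$ via invariance of the slice-length distribution, invoke the compactness of Lemma~\ref{lemma:equalize1} together with the convex-limit lemma (the one immediately following it, which the paper mis-cites as Lemma~\ref{lemma:convexquadratictail}) to get $\|f-g\|_{L^1}+\|k-g\|_{L^1}=o_\delta(1)$, then finish with Chebyshev and the geometry of $\scriptc$. Your decomposition of the quadratic bound by $|x|\le R$ versus $|x|>R$, bounding the set $\{|x|\le R,\ f\le\eta\}$ linearly in $\eta$ and then multiplying by $\eta$, is a mild reorganization of the paper's exceptional-set argument and is equally valid, since the proof of Lemma~\ref{lemma:convexquadratictail} does yield $|\pi(\scriptc)\setminus\scriptc(\eta)|=O(\eta)$ before integration over the fiber.
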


\begin{proof}
Recall that $\scripta(s)=\set{x\in\reals^{d-1}: |A_x|>s}$ and $|\scriptb(s)|=|\set{x: |B_x|>s}|$.
These are the distribution functions
of the $L^1(\reals^{d-1})$ functions $\reals^{d-1}\owns x\mapsto |A_x|$ and $x\mapsto |B_x|$.
The functions $x\mapsto |A_x|$ and $x\mapsto |A^\doublesymm_x|$ have 
identical distribution functions, and likewise for the pair $B, B^\doublesymm$.
The conclusions of this lemma are all statements about these distribution functions alone. 
Therefore it suffices to prove the lemma with $A,B$ replaced by $A^\doublesymm,B^\doublesymm$, respectively;
that is, under the assumption that $A=A^\doublesymm$ and $B=B^\doublesymm$.

We have shown in Lemma~\ref{lemma:convexquadratictail}  that, under the assumption
that $A=A^\doublesymm$ and $B=B^\doublesymm$, there exists a 
convex set $\scriptc\subset\reals^d$ such that $|A\bigtriangleup \scriptc|+|B\bigtriangleup\scriptc|=o_\delta(1)$.
Thus
\begin{equation} \label{smallLonedifferences}
\begin{aligned} &
\int_{\reals^{d-1}} \big|\, |A_x|-|\scriptc_x|   \,\big|\,dx 
=|A\bigtriangleup\scriptc| = o_\delta(1)
\\ & \int_{\reals^{d-1}} \big|\, |B_x|-|\scriptc_x|   \,\big|\,dx 
=|B\bigtriangleup\scriptc| = o_\delta(1).  \end{aligned}
\end{equation}
Therefore
\[\int_{\reals^{d-1}} \big|\, |A_x|-|B_x|   \,\big|\,dx \le \rho(\delta)
\ \text{ where $\rho(\delta)\to 0$ as $\delta\to 0$.} \]
Therefore by Chebyshev's inequality, there exists a set $E\subset\reals^{d-1}$ satisfying 
$|E|\le\rho(\delta)^{1/2}$  such that
$\big|\, |A_x|-|B_x| \,\big|\le \rho(\delta)^{1/2}$
for all $x\in\reals^{d-1}\setminus E$. 
This directly implies \eqref{nearlysamedistfns} with $\eps(\delta)=\rho(\delta)^{1/2}$.

By the same reasoning, there exists a set $\tilde E$
satisfying $|\tilde E|\le\rho(\delta)^{1/2}$ such that
$\big|\, |A_x|-|\scriptc_x| \,\big|\le \rho(\delta)^{1/2}$
for all $x\in\reals^{d-1}\setminus\tilde E$. 
We have shown in Lemma~\ref{lemma:convexquadratictail} that 
$ |\scriptc\setminus\pi^{-1}(\scriptc(\eta))| \le C\eta^2$
for all $\eta>0$, where $\scriptc(s)=\set{x\in\reals^{d-1}: |\scriptc_x|>s}$. 
Combining these statements gives \eqref{quadraticinverseprojectionbounds} for $A$.
The conclusion for $B$ follows in the same way.
\end{proof}

\section{Horizontal structure}

Let $A,B,\Lambda,d,t$ satisfy our hypotheses \eqref{hypos} and be normalized.
Form their sumset $S=tA+(1-t)B$.  Consider the superlevel sets
\begin{align*} \scripta(s)=\set{x: |A_x|>s}, \qquad  \scriptb(s) =\set{x: |B_x|>s},
\qquad  \scripts(s)=\set{x: |S_x|>s}.  \end{align*}

If $A\subset\reals^d$ is a convex set, then its projection $\pi(A)$ is likewise convex.
Our purpose here is to show that if a pair $(A,B)$ realizes near equality
in the Brunn-Minkowski inequality, then 
sets closely related to $\pi(A)$ are approximately convex. The particular
sets in question are the superlevel sets $\scripta(s)$
for small $s>0$. Such a conclusion is however not quantitatively affine-invariant,
so it is necessary to reduce matters to normalized sets throughout the discussion.

For any $x',x''\in\reals^{d-1}$,
\begin{equation} S_{tx'+(1-t)x''} \supset tA_{x'}+(1-t)B_{x''}\end{equation}
and therefore by the Brunn-Minkowski inequality for $\reals^1$,
if $|A_{x'}|>\lambda$ and $|B_{x''}|>\mu$ then 
$|S_{tx'+(1-t)x''}|\ge t\lambda+(1-t)\mu$.
Therefore
\begin{equation}
\set{x: |S_x|>t\lambda+(1-t)\mu}\supset t\set{x': |A_{x'}|>\lambda}+(1-t)\set{x'': |B_{x''}|>\mu}. 
\end{equation}
By the Brunn-Minkowski inequality for $\reals^{d-1}$,
\begin{equation}
|\scripts(t\lambda+(1-t)\mu)| \ge |\scripta({\lambda})^t|\scriptb(\mu)|^{1-t}.
\end{equation}

Let $\delta\mapsto\eps(\delta)$ be the function which appears in the key Lemma~\ref{lemma:squareerror}.
To simplify notation, we write $\eps$ for $\eps(\delta)$ in the following discussion.
Let $\sigma $ be the supremum of the set of all $s\ge 0$ such that
both $|\scriptb(s)|>0$ and $|\scripta(s+\eps)|>0$. Then
for any $s\in(0,\sigma)$, 
\[ |\scripts(s+t\eps)| \ge |\scripta(s+\eps)|^t|\scriptb(s)|^{1-t}
\ge |\scriptb(s)|-O(\eps).  \]
The first inequality was noted in the preceding paragraph; the definition of $\sigma$
ensures that $\scripta(s+\eps),\scriptb(s)$ have positive measures, so in particular are nonempty.
The second inequality is a conclusion of Lemma~\ref{lemma:squareerror}.

\begin{lemma}
\[ \int_\sigma^\infty |\scriptb(s)|\,ds = O(\eps).  \]
\end{lemma}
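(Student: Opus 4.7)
The plan is to combine the pointwise bound $|\scriptb(s)| \le |\scripta(s - \eps)| + \eps$ from Lemma~\ref{lemma:squareerror} with the defining property of $\sigma$ and the monotonicity of the distribution functions $s \mapsto |\scriptb(s)|$ and $s \mapsto |\scripta(s)|$. The aim is to show that $|\scriptb(s)|$ is at most $\eps$ for every $s$ outside a short interval of length $O(\eps)$ adjacent to $\sigma$, after which the estimate follows by integrating against the trivial normalization bound.

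First I would record the following immediate consequence of the definition of $\sigma$: for every $r > \sigma$, at least one of $|\scriptb(r)| = 0$ or $|\scripta(r + \eps)| = 0$ must hold. Applying this at $r = s - 2\eps$ for any $s > \sigma + 2\eps$ gives the dichotomy: either (i) $|\scriptb(s - 2\eps)| = 0$, in which case monotonicity of $s \mapsto |\scriptb(s)|$ forces $|\scriptb(s)| = 0$; or (ii) $|\scripta(s - \eps)| = 0$, in which case Lemma~\ref{lemma:squareerror} yields $|\scriptb(s)| \le |\scripta(s - \eps)| + \eps = \eps$. In either case,
\[ |\scriptb(s)| \le \eps \qquad \text{for every } s > \sigma + 2\eps. \]

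Second, I would split the integral at the threshold $\sigma + 2\eps$. On $[\sigma, \sigma + 2\eps]$, the trivial bound $|\scriptb(s)| \le |\pi(B)| \le \gamma$ from the normalization of $B$ contributes at most $2\gamma\eps$. On $(\sigma + 2\eps, \infty)$, the normalization also gives $|\scriptb(s)| = 0$ for $s > \gamma$ (since $\sup_x |B_x| \le \gamma$ forces $\scriptb(s)$ to be empty for such $s$), so integrating the bound $|\scriptb(s)| \le \eps$ over the bounded remaining interval contributes at most $\gamma\eps$. Summing the two contributions gives $\int_\sigma^\infty |\scriptb(s)|\,ds = O(\eps)$.

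The only subtle point is the two-step extraction of the dichotomy from the definition of $\sigma$, namely evaluating the defining condition at the shifted point $r = s - 2\eps$ so that Lemma~\ref{lemma:squareerror} can be invoked with its natural $\eps$-shift. Once that shift is identified, the rest of the argument is an elementary splitting together with two bounds that come for free from the normalization hypothesis.
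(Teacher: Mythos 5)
Your proof is correct and follows essentially the same route as the paper: split the integral at $\sigma+2\eps$, use the normalization to bound the short piece trivially and to cut off the long piece at $\gamma$, and use the definition of $\sigma$ together with Lemma~\ref{lemma:squareerror} to show $|\scriptb(s)|\le\eps$ past $\sigma+2\eps$. Your phrasing as an explicit dichotomy at $r=s-2\eps$ is a slightly cleaner way to extract what the paper gets from its ``suppose there exists $s>\sigma+2\eps$'' step, but the underlying argument is the same.
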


\begin{proof}
Certainly
\[ \int_\sigma^{\sigma+2\eps} |\scriptb(s)|\,ds \le \int_\sigma^{\sigma+2\eps} |\pi(B)|\,ds = O(\eps).  \]
Suppose that there exists $s>\sigma+2\eps$ for which $|\scriptb(s)|>0$.  Then $\sigma$, 
by its definition, must equal the supremum of the set of all $s'$ for which $|\scripta(s')|>0$.
Thus $|\scripta(s-\eps)|=0$ and consequently by Lemma~\ref{lemma:squareerror},
$|\scriptb(s)|<\eps$. By Corollary~\ref{cor:normalize}, there exists $\gamma=O(1)$
such that $|\scriptb(s)|=0$ for all $s>\gamma$.  Therefore
\[ \int_{\sigma+2\eps}^\infty |\scriptb(s)|\,ds = \int_{\sigma+2\eps}^\gamma |\scriptb(s)|\,ds 
\le  \int_{\sigma+2\eps}^\gamma |\pi(B)|\,ds =O(\eps). \]
\end{proof}

For $0<s<\sigma$,
$ |\scripts(s+t\eps)| -|\scripta(s+\eps)|^t|\scriptb(s)|^{1-t}\ge 0$. 
\begin{lemma}
\[ \int_0^{\sigma} 
\Big( |\scripts(s+t\eps)| -|\scripta(s+\eps)|^t|\scriptb(s)|^{1-t} \Big)
\,ds = O(\delta+\eps(\delta)).  \]
\end{lemma}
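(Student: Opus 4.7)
The plan is to bound the two pieces of the integrand separately: the hypothesis $|tA+(1-t)B|\le 1+\delta$ controls $|\scripts(s+t\eps)|$ from above, and the distribution-function estimates of Lemma~\ref{lemma:squareerror} control $|\scripta(s+\eps)|^t|\scriptb(s)|^{1-t}$ from below. Since the integrand is nonnegative (noted just before the statement), only an upper bound on the integral is required. A simple change of variable yields
\[
\int_0^\sigma |\scripts(s+t\eps)|\,ds \le \int_{t\eps}^\infty |\scripts(u)|\,du \le |S| \le 1+\delta.
\]

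For the matching lower bound on $\int_0^\sigma |\scripta(s+\eps)|^t|\scriptb(s)|^{1-t}\,ds$, the key device is the elementary inequality $a^t b^{1-t}\ge\min(a,b)$, valid for $a,b\ge 0$ and $t\in[0,1]$. The first line of \eqref{nearlysamedistfns}, applied at $s+\eps$, asserts $|\scripta(s+\eps)|\le|\scriptb(s)|+\eps$, so for every $s\in(0,\sigma)$
\[
|\scripta(s+\eps)|^t|\scriptb(s)|^{1-t}\ge \min\bigl(|\scripta(s+\eps)|,|\scriptb(s)|\bigr)\ge |\scripta(s+\eps)|-\eps,
\]
the last step being trivial when $|\scripta(s+\eps)|<\eps$. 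Since $\sigma=O(1)$ by normalization, integration then gives
\[
\int_0^\sigma |\scripta(s+\eps)|^t|\scriptb(s)|^{1-t}\,ds \ge \int_\eps^{\sigma+\eps}|\scripta(u)|\,du - O(\eps).
\]

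It therefore suffices to show $\int_\eps^{\sigma+\eps}|\scripta(u)|\,du\ge 1-O(\delta+\eps)$. Write the integral as $|A|$ minus a head piece and a tail piece. The head satisfies $\int_0^\eps|\scripta(u)|\,du\le\eps|\pi(A)|=O(\eps)$ from normalization. For the tail, apply the same first line of \eqref{nearlysamedistfns}, now in the form $|\scripta(u)|\le |\scriptb(u-\eps)|+\eps$, together with the fact that $\sup_x|A_x|=O(1)$ (which confines the tail to an interval of bounded length) and the preceding lemma $\int_\sigma^\infty|\scriptb(v)|\,dv=O(\eps)$, to conclude that $\int_{\sigma+\eps}^\infty|\scripta(u)|\,du=O(\eps)$. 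Combined with $|A|\ge 1-\delta$, this yields the desired lower bound, and subtraction from the upper estimate gives the claim. The only subtle point is the pointwise inequality in the second paragraph: Lemma~\ref{lemma:squareerror} supplies only one-sided information about the difference $|\scripta|-|\scriptb|$, and the geometric-mean-versus-minimum inequality is precisely the device that turns that asymmetric control into a symmetric lower bound for the geometric mean without any case analysis.
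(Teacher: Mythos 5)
Your proof is correct, and it follows the same overall strategy as the paper: bound $\int_0^\sigma|\scripts(s+t\eps)|\,ds$ above by $|S|\le 1+\delta$, bound $\int_0^\sigma|\scripta(s+\eps)|^t|\scriptb(s)|^{1-t}\,ds$ below by a total measure minus $O(\eps)$, and control the leftover tail via the preceding lemma. The genuine difference is which distribution function the geometric mean is anchored to. The paper asserts the pointwise estimate $|\scripta(s+\eps)|^t|\scriptb(s)|^{1-t}\ge|\scriptb(s)|-O(\eps)$ and integrates, picking up $|B|$ and needing only the tail $\int_\sigma^\infty|\scriptb|$. You instead use $a^tb^{1-t}\ge\min(a,b)$ and the one-sided bound $|\scripta(s+\eps)|\le|\scriptb(s)|+\eps$ from \eqref{nearlysamedistfns} to get $\min(a,b)\ge|\scripta(s+\eps)|-\eps$, picking up $|A|$ after a change of variable; this obliges you to handle both a head $\int_0^\eps|\scripta|$ and a tail $\int_{\sigma+\eps}^\infty|\scripta|$, which you do correctly (head via $\eps|\pi(A)|=O(\eps)$; tail via $|\scripta(u)|\le|\scriptb(u-\eps)|+\eps$, the bounded support of $u\mapsto|\scripta(u)|$ from normalization, and the preceding lemma). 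It is worth noting that your anchoring is the more defensible one: since only $|\scripta(s+\eps)|\le|\scriptb(s)|+\eps$ is available at a given $s$, the $\min$ inequality correctly yields a lower bound in terms of $|\scripta(s+\eps)|$, whereas a pointwise lower bound in terms of $|\scriptb(s)|$ would additionally require $|\scripta(s+\eps)|\ge|\scriptb(s)|-O(\eps)$, which \eqref{nearlysamedistfns} supplies only for $|\scripta(s-\eps)|$, not $|\scripta(s+\eps)|$. Your device sidesteps that issue at the modest cost of one extra (trivial) head estimate.
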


\begin{proof}
\begin{align*}
\int_0^\sigma \Big( |\scripts(s+t\eps)| -|\scripta(s+\eps)|^t|\scriptb(s)|^{1-t} \Big)\,ds
& \le 
\int_0^\sigma \Big( |\scripts(s+t\eps)| -|\scriptb(s)|+O(\eps) \Big)\,ds
\\ & \le 
|S| -|B| + O(\eps)+ \int_\sigma^\infty |\scriptb(s)|\,ds
\\ & \le 
O(\delta)+O(\eps) 
\end{align*}
by the preceding lemma.
\end{proof}

By replacing $\eps(\delta)$ by $\eps(\delta)+\delta$, we may assume  that $\eps(\delta)\ge\delta$, 
Let $\eta=\eta(\delta)$, $\rho=\rho(\delta)$  be positive quantities which tend to zero as $\delta\to 0$. 
Choose $\rho(\delta)$ to tend to zero sufficiently slowly that 
\begin{equation} \label{rhocondition} 
\rho(\delta)^{-1}\eps(\delta)\to 0 \ \text{ as $\delta\to 0$.} \end{equation}
Choose $\eta(\delta)$ to tend to zero sufficiently slowly that 
\begin{equation} \label{etacondition}
\lim_{\delta\to 0} \frac{\rho(\delta)^{-1}\eps(\delta)+\eps(\delta)^{1/2}}{\eta(\delta)} = 0 
\end{equation}
and so that $\eta(\delta)$ satisfies another condition \eqref{secondetacondition} of the same
type, to be specified below.

Write $\eps=\eps(\delta)$.
Apply Chebyshev's inequality to conclude that 
\[ \Big|\set{s\in[0,\sigma]: 
\Big( |\scripts(s+t\eps) -|\scripta(s+\eps)|^t|\scriptb(s)|^{1-t} \Big) \,ds >\rho }\Big|
=O(\rho^{-1}\eps).  \]
Therefore by the properties \eqref{rhocondition}, \eqref{etacondition} of $\rho,\eta$,
there exists $\bar s\in[\eta,2\eta]$ such that
\begin{equation} |\scripts(\bar s+t\eps)| <|\scripta(\bar s+\eps)|^t|\scriptb(\bar s)|^{1-t} + \rho.
\end{equation}
Therefore
\begin{equation} \label{eq:rhosumbound}
|t\scripta(\bar s+\eps) +(1-t) \scriptb(\bar s)| <|\scripta(\bar s+\eps)|^t|\scriptb(\bar s)|^{1-t} + \rho.
\end{equation}


We will also need to know that the measures of $\scripta(\bar s+\eps)$ and of $\scriptb(\bar s)$
are nearly equal.
For any $s$,
\begin{equation} |\scripta(s-\eps)|\le |\scriptb(s)|+\eps\le|\scripta(s+\eps)|+2\eps \end{equation}
by Lemma~\ref{lemma:squareerror}.
By Lemma~\ref{lemma:leveladjustment} and the fact that $|\pi(A)|=O(1)$,
$|\scripta(s-\eps)|\le |\scripta(s+\eps)|+\eps^{1/2}$
for all $s$ outside of some exceptional set of measure $O(\eps^{1/2})$.
If $\delta$ is sufficiently small then for any such favorable value of $s$, 
\begin{equation*} 
\big|\,|\scripta(\bar s+\eps)|-|\scriptb(\bar s)\,\big| \le 2\eps^{1/2}\ll \eta(\delta)
\end{equation*}
since $\frac{\eta(\delta)}{\eps(\delta)^{1/2}}\to\infty$ as $\delta\to 0$.

Therefore for sufficiently small $\delta$ there exists $\bar s\in[\eta,2\eta]$ such that 
\eqref{eq:rhosumbound} holds and also
\begin{equation} \label{eq:nearlyequalmeasures}
\big|\,|\scripta(\bar s+\eps)|-|\scriptb(\bar s)\,\big| \le 2\eps^{1/2}.
\end{equation}

Fix such a parameter $\bar s$,
and set $\scripta=\scripta(\bar s+\eps)$ and $\scriptb=\scriptb(\bar s)$.
Since $\eta(\delta)\to 0$, 
$|\scriptb|$ and $|\scripta|$ are bounded below by a strictly positive constant
for all sufficiently small $\delta$.
Provided that $\rho=\rho(\delta)\to 0$ as $\delta\to 0$, it follows from 
\eqref{eq:rhosumbound} and \eqref{eq:nearlyequalmeasures}
that for any sufficiently small $\delta$,
the pair of sets $\scripta,\scriptb$ satisfies the hypotheses of Theorem~\ref{thm:BMadd} 
in dimension $d-1$.
Therefore by the induction hypothesis, whenever $\delta$ is sufficiently small,
there exist a convex set $\scriptc\subset\reals^{d-1}$  containing $0$
and independent translations of  $A,B$, which we continue to denote by $A,B$ respectively, such that
\begin{align*}
&\scriptb\subset\scriptc \ \text{ and } \ |\scriptc\setminus\scriptb|=o_{\rho,\eps}(1),
\\ &\scripta\subset\scriptc \ \text{ and } \ |\scriptc\setminus \scripta|=o_{\rho,\eps}(1).
\end{align*}
It follows from what has been shown
that these quantities denoted by $o_{\rho,\eps}(1)$ are majorized by functions of $\rho,\eps$ alone;
this will be important in the sequel.

Now there is the obvious bound \[|A\setminus \pi^{-1}(\scriptc)|\le |A\setminus \pi^{-1}(\scripta)|
\le (\bar s+\eps)|\pi(A)|\lesssim \bar s+\eps = O(\eta(\delta)).\]
However, Lemma~\ref{lemma:squareerror} guarantees the improved bounds
\begin{equation} \label{betterbound}
|A\setminus \pi^{-1}(\scripta)| + |B\setminus\pi^{-1}(\scriptb)| = O(\eta(\delta)^2).  \end{equation}

\section{Vertical structure}
Our next step is to show that for most $x\in\reals^{d-1}$ the fiber sets $A_x$
are nearly convex --- that is, nearly coincide with intervals. This step
relies on the one-dimensional case of our main theorem, which has the following more precise formulation.
\begin{proposition} \label{prop:1Dnearequality} 
Let $U,V\subset\reals^1$ be nonempty Borel sets with finite Lebesgue measures.
If $|U+V|<|U|+|V|+\eps$, and if $\eps<\min(|U|,|V|)$,
then there exists an interval $I\subset\reals^1$ satisfying 
\begin{equation} I\supset U \ \text{ and } \ |I\setminus U|< |U|+\eps.  \end{equation}
\end{proposition}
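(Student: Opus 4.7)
The proposition is the one-dimensional case of Brunn-Minkowski stability, a continuous version of Freiman's $3k-4$ theorem for integer sumsets, and it is essentially the pivotal input used in the author's earlier works \cite{christrieszsobolev} and \cite{christbmtwo}. The conclusion stated here is in fact weaker than the sharp Freiman-type bound $|I\setminus U|\le\eps$, which would imply it since $|U|\ge 0$.

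By inner regularity of Lebesgue measure, reduce to the case where $U,V$ are compact. After translations assume $\inf U=\inf V=0$, and write $M=\sup U$ and $N=\sup V$, so $U\subset I:=[0,M]$ (the convex hull of $U$) and $V\subset[0,N]$. The conclusion $|I\setminus U|<|U|+\eps$ is equivalent to the bound $M<2|U|+\eps$. Without loss of generality $M\ge N$; the case $N>M$ is symmetric and the proposition's conclusion only concerns $U$ in any event.

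As an easy first step, because $0$ and $M$ both lie in the closure of $U$, each of the translates $V=\{0\}+V$ and $V+M=\{M\}+V$ is contained in $U+V$. The assumption $M\ge N$ makes these two translates essentially disjoint (their intersection lies in $\{N\}\cap\{M\}$, a null set), so $|U+V|\ge 2|V|$. Combined with the hypothesis $|U+V|<|U|+|V|+\eps$, this yields $|V|<|U|+\eps$.

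The main work is to obtain quantitative control on $M$ itself. The cleanest route is to prove the sharper inequality $|U+V|\ge M+|V|$, which combined with the hypothesis gives the strictly stronger bound $M<|U|+\eps$. One considers the auxiliary sumset $I+V$: since $I=[0,M]$ is an interval, the one-dimensional Brunn-Minkowski inequality gives $|I+V|\ge M+|V|$, and the inclusion $(I+V)\setminus(U+V)\subset(I\setminus U)+V$ reduces matters to estimating $|(I\setminus U)+V|$ from above. This last estimate is the combinatorial heart of the argument and is handled by iteratively applying the one-dimensional Brunn-Minkowski inequality to carefully chosen partitions of $U$ into pieces near the extreme points $0$ and $M$, using the bound $|V|<|U|+\eps$ from the easy step. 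The main obstacle is precisely this final estimate: elementary sumset bounds from extreme-point translates such as $V\cup(V+M)$ or $U\cup(U+N)$ yield only qualitative comparisons of $|U|$, $|V|$, $M$, $N$; the quantitative upper bound on $M$ requires the full one-dimensional stability machinery---the continuous Freiman theorem---which is classical in additive combinatorics and underlies the author's earlier applications in \cite{christrieszsobolev} and \cite{christbmtwo}.
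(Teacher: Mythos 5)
The paper gives no proof of Proposition~\ref{prop:1Dnearequality} at all: it is stated as ``the one-dimensional case of our main theorem'' and is imported from the author's earlier work \cite{christrieszsobolev} on near-equality in the one-dimensional Brunn-Minkowski inequality (a continuous Freiman-type theorem). So there is no proof in this paper to compare against, and your attempt must be judged on its own.

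Your reduction and easy step are fine: compactness, normalization $\inf U=\inf V=0$, $M=\sup U\ge N=\sup V$, and $|U+V|\ge 2|V|$ from the essentially disjoint translates $V$ and $V+M$, giving $|V|<|U|+\eps$. But the main step has a genuine gap, and the specific route you sketch cannot be made to work. You correctly observe that $(I+V)\setminus(U+V)\subset(I\setminus U)+V$, hence $|U+V|\ge |I+V|-|(I\setminus U)+V|\ge M+|V|-|(I\setminus U)+V|$. To conclude $|U+V|\ge M+|V|$ (or anything close) you would then need $|(I\setminus U)+V|$ to be small. It never is: whenever $I\setminus U$ is nonempty, the one-dimensional Brunn-Minkowski inequality gives $|(I\setminus U)+V|\ge |I\setminus U|+|V|\ge |V|$, so this chain only yields $|U+V|\ge M - |I\setminus U|$, which is useless (it amounts to $|U+V|\ge |U|$). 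The remaining sentence about ``iteratively applying the one-dimensional Brunn-Minkowski inequality to carefully chosen partitions'' does not describe an actual argument and, as written, it cannot repair the broken inclusion step. In the end your text defers to ``the full one-dimensional stability machinery --- the continuous Freiman theorem,'' which is exactly the assertion to be proved. Citing that result is legitimate (it is how the paper itself handles the matter), but then the body of the proof should be replaced by a citation rather than an incorrect sketch. If you want an actual proof, the standard argument is quite different in flavor: one proves the sharp inequality $|U+V|\ge \diameter(U)+|V|$ under the hypothesis $|U+V|<|U|+|V|+\min(|U|,|V|)$ by a compression/ordering argument adapted from the discrete $3k-4$ theorem, not by subtracting a sumset of the complement.
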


A delicate point in applications of this result is the necessity to ensure 
validity of the hypothesis $|U+V|-|U|-|V|<\min(|U|,|V|)$. 
In particular, the Brunn-Minkowski inequality can only be applied to fibers $A_x,B_y$
when both are known to be nonempty.  The crucial point in this regard will be 
the appearance of a bound $o(\eta)$ in \eqref{betterbound}.

\begin{lemma} \label{lemma:vertical}
Let $d\ge 2$, and let $\Lambda\subset(0,1)$ be a compact set.
There exists a positive function $\delta\mapsto\eta(\delta)$
satisfying $\lim_{\delta\to 0} \eta(\delta)=0$,  with the following property.
For any $t\in\Lambda$, any sufficiently small $\delta>0$,
and any Borel sets $A,B\subset\reals^d$ satisfying the hypotheses \eqref{hypos},
there exist a measure-preserving linear transformation $L$ of $\reals^d$
and vectors $a,b\in\reals^d$ such that after replacement of $A,B$ by $L(A)+a$, $L(B)+b$ respectively,
there exist a convex set $\scriptc\subset\reals^{d-1}$
and a  measurable subset $\scriptd\subset\scriptc$ such that
\begin{align*}
& |\scriptc|\asymp 1 
\\
&|\scriptc\setminus\scriptd|=o_\delta(1)
\\
&|A\cap \pi^{-1}(\reals^d\setminus \scriptd)| = o_\delta(1)
\\
&|B\cap \pi^{-1}(\reals^d\setminus \scriptd)| = o_\delta(1).
\end{align*}
Moreover,
\begin{equation} \label{turningpoint} \left\{ \begin{aligned}
&\min(|A_x|,|B_x|)\ge\eta(\delta) \text{ for each } x\in \scriptd
\\
&|S_x|-t|A_x|-(1-t)|B_x| \le  \eta(\delta)^{3/2} \text{ for each } x\in\scriptd.
\end{aligned} \right.  \end{equation}
\end{lemma}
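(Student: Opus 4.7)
The plan is to build $\scriptd$ as a mild shrinking of the set $\scripta\cap\scriptb$ produced at the end of the horizontal-structure analysis, discarding those $x$ where the one-dimensional Brunn-Minkowski defect $|S_x|-t|A_x|-(1-t)|B_x|$ is anomalously large. First, I would invoke Corollary~\ref{cor:normalize2} (yielding $L$) together with the independent $\reals^{d-1}$-translations produced at the end of Section~6, so that $A,B$ are normalized and there exists a convex set $\scriptc\subset\reals^{d-1}$, with $|\scriptc|\asymp 1$, containing both $\scripta=\scripta(\bar s+\eps)$ and $\scriptb=\scriptb(\bar s)$ for the level $\bar s\in[\eta(\delta),2\eta(\delta)]$ chosen there, with $|\scriptc\setminus\scripta|+|\scriptc\setminus\scriptb|=o_\delta(1)$. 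Set $\scriptd_0:=\scripta\cap\scriptb$. Automatically $\min(|A_x|,|B_x|)\ge\bar s\ge\eta(\delta)$ on $\scriptd_0$, so the first line of \eqref{turningpoint} holds for free, and $|\scriptc\setminus\scriptd_0|\le|\scriptc\setminus\scripta|+|\scriptc\setminus\scriptb|=o_\delta(1)$.

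The second line of \eqref{turningpoint} is to be achieved by removing a further small subset. Since both fibers are nonempty on $\scriptd_0$, one-dimensional Brunn-Minkowski gives $|S_x|\ge t|A_x|+(1-t)|B_x|$ there, so the defect is nonnegative on $\scriptd_0$. Integrating and splitting,
\[
\int_{\scriptd_0}\bigl(|S_x|-t|A_x|-(1-t)|B_x|\bigr)\,dx
\le \bigl(|S|-t|A|-(1-t)|B|\bigr) + t|A\setminus\pi^{-1}(\scriptd_0)| + (1-t)|B\setminus\pi^{-1}(\scriptd_0)|.
\]
The first summand is $O(\delta)$ by \eqref{hypos}. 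For each of the latter two I would bound, for instance, $|A\setminus\pi^{-1}(\scriptd_0)|\le|A\setminus\pi^{-1}(\scripta)|+\gamma|\scripta\setminus\scriptb|$, with the first term $O(\eta(\delta)^2)$ by \eqref{betterbound} and $|\scripta\setminus\scriptb|\le|\scriptc\setminus\scriptb|=o_\delta(1)$. The entire integral is therefore some explicit $o_\delta(1)$ quantity whose dependence on $\delta$ runs only through the already-fixed auxiliary functions $\eps(\delta),\rho(\delta)$ and the convergence rate supplied by the inductive hypothesis in dimension $d-1$.

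Finally, I would impose as the promised condition \eqref{secondetacondition} that $\eta(\delta)$ tend to zero slowly enough that the above $o_\delta(1)$ bound is in fact $o(\eta(\delta)^{3/2})$. This is possible precisely because the bound depends on $\delta$ only through quantities fixed prior to the choice of $\eta$. Chebyshev's inequality then yields $|\{x\in\scriptd_0:|S_x|-t|A_x|-(1-t)|B_x|>\eta(\delta)^{3/2}\}|=o_\delta(1)$, and I set $\scriptd$ equal to the complement of this exceptional set inside $\scriptd_0$. Both lines of \eqref{turningpoint} then hold on $\scriptd$, and each of $|\scriptc\setminus\scriptd|$, $|A\cap\pi^{-1}(\reals^{d-1}\setminus\scriptd)|$, and $|B\cap\pi^{-1}(\reals^{d-1}\setminus\scriptd)|$ is the sum of the corresponding quantity for $\scriptd_0$ and the $o_\delta(1)$ Chebyshev error. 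The principal obstacle is bookkeeping of the several simultaneous decay rates: $\eta(\delta)$ must be small enough that \eqref{etacondition} and the horizontal section's use of the induction hypothesis remain applicable, yet large enough that $\eta(\delta)^{3/2}$ dominates the total one-dimensional defect; since all prior rates are explicit functions of $\delta$, this reduces to a routine slow-decay choice, after which the four measure-theoretic properties of $\scriptd$ verify themselves.
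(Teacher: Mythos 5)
Your proposal matches the paper's argument in all essentials: both define $\scriptd$ by removing from $\scripta\cap\scriptb$ the set where the one-dimensional Brunn-Minkowski defect $|S_x|-t|A_x|-(1-t)|B_x|$ exceeds $\eta^{3/2}$, bound the integrated defect by $O(\delta)+O(\eta^2)+o_{\rho,\eps}(1)$ using \eqref{hypos}, normalization, and \eqref{betterbound}, and conclude via Chebyshev after imposing \eqref{secondetacondition} as a slow-decay constraint on $\eta$. One small inaccuracy in your justification: the claim that the integral bound ``depends on $\delta$ only through quantities fixed prior to the choice of $\eta$'' is not literally true, since \eqref{betterbound} contributes the $\eta$-dependent term $O(\eta^2)$; the argument nevertheless goes through because after dividing by $\eta^{3/2}$ that term yields $O(\eta^{1/2})$, which tends to zero for any choice of $\eta\to 0$, so only the $\delta$ and $o_{\rho,\eps}(1)$ contributions actually constrain the slow decay of $\eta$ --- which is exactly what \eqref{secondetacondition} encodes.
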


Before proving Lemma~\ref{lemma:vertical}, we record a consequence.
For small $\delta$, $\eta^{3/2}\ll\eta$ and therefore
\[|S_t|-t|A_x|-(1-t)|B_x|\ll \min(|A_x|,|B_x|)\ \text{ for each $x\in\scriptd$.}\]
This crucial point, guaranteed in more quantitative form by \eqref{turningpoint},
depends on there being a bound $o_\eta(1)$ in the first two conclusions of Lemma~\ref{lemma:squareerror}.

A direct application of Proposition~\ref{prop:1Dnearequality} gives:
\begin{corollary}
Let $A,B$ satisfy the hypotheses of Lemma~\ref{lemma:vertical}, and let $\scriptd$
satisfy the conclusions of that lemma.
Then for each $x\in\scriptd$ there exist intervals $I_x,J_x\subset\reals^1$
such that $A_x\subset I_x$, $B_x\subset J_x$, 
\begin{equation}
|I_x|<|A_x|+o_\delta(1)
\ \text{ and } \ 
|J_x|<|B_x|+o_\delta(1).
\end{equation}
These intervals can be chosen so that their centers and lengths are measurable functions of $x\in\scriptd$.
\end{corollary}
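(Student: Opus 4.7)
The plan is to apply Proposition~\ref{prop:1Dnearequality} fiberwise on $\scriptd$, after a simple rescaling that converts the near-equality in $|tA_x + (1-t)B_x| \ge t|A_x| + (1-t)|B_x|$ into the additive form required by the proposition. Fix $x \in \scriptd$. From the fiberwise inclusion $S_x \supset tA_x + (1-t)B_x$, combined with the second bound in \eqref{turningpoint},
\[|tA_x + (1-t)B_x| \le |S_x| \le t|A_x| + (1-t)|B_x| + \eta(\delta)^{3/2}.\]
Setting $U = tA_x$ and $V = (1-t)B_x$, both nonempty by the first bound in \eqref{turningpoint}, this becomes $|U+V| \le |U| + |V| + \eta(\delta)^{3/2}$. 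The decisive quantitative point is that $\min(|U|, |V|) \ge \min(t, 1-t)\, \eta(\delta)$, and since $t$ ranges over the compact set $\Lambda \subset (0,1)$ this is bounded below by a constant multiple of $\eta(\delta)$. Because $\eta(\delta)^{3/2} \ll \eta(\delta)$ for small $\delta$, the hypothesis $\eps < \min(|U|, |V|)$ of Proposition~\ref{prop:1Dnearequality} holds with $\eps = \eta(\delta)^{3/2}$. The proposition then supplies an interval containing $U$ whose length exceeds $|U|$ by $o_\delta(1)$; rescaling by $t^{-1}$ yields an interval containing $A_x$ whose length exceeds $|A_x|$ by $t^{-1} o_\delta(1) = o_\delta(1)$. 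A symmetric argument with the roles of $A$ and $B$ exchanged produces the analogous interval for $B_x$.

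To pin down canonical choices $I_x, J_x$ that depend measurably on $x$, take $I_x = [a_x, b_x]$ to be the essential convex hull of $A_x$, with
\[a_x = \sup\set{a \in \reals : |A_x \cap (-\infty, a)| = 0}, \qquad b_x = \inf\set{b \in \reals : |A_x \cap (b, \infty)| = 0},\]
and define $J_x$ analogously from $B_x$. Since $I_x$ is contained, up to a null set, in every interval containing $A_x$, the estimate of the preceding paragraph forces $|I_x| \le |A_x| + o_\delta(1)$, and evidently $A_x \subset I_x$ up to a null set. For each fixed $a \in \reals$ the function $x \mapsto |A_x \cap (-\infty, a)|$ is Borel measurable by Fubini's theorem, and passing to the supremum (respectively infimum) over rationals preserves measurability, so $a_x$ and $b_x$ are Borel functions on $\scriptd$; their half-sum and difference give the center and length of $I_x$, and likewise for $J_x$. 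The only genuine obstacle in this proof is guaranteeing that $\eps = \eta(\delta)^{3/2}$ is strictly smaller than $\min(|U|, |V|)$, which is the reason for the exponent $3/2$ in \eqref{turningpoint}; this gap between $\eta^{3/2}$ and the lower bound $\eta$ on $|A_x|$ and $|B_x|$ has already been built into Lemma~\ref{lemma:vertical}, so the deduction here is essentially an application of the one-dimensional result to slices.
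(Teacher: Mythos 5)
Your argument is correct and is exactly the ``direct application'' the paper has in mind: rescaling to $U = tA_x$, $V = (1-t)B_x$ turns the second estimate in \eqref{turningpoint} into $|U+V| \le |U| + |V| + \eta^{3/2}$ while the first gives $\min(|U|,|V|) \ge \min(t,1-t)\,\eta \gg \eta^{3/2}$, so Proposition~\ref{prop:1Dnearequality} applies fiberwise, and rescaling back by $t^{-1}$ (resp.\ $(1-t)^{-1}$), bounded since $t$ ranges over a compact subset of $(0,1)$, yields the stated intervals; your essential-convex-hull construction is a sensible way to supply the measurability detail the paper leaves implicit. One small flag: Proposition~\ref{prop:1Dnearequality} as printed concludes $|I\setminus U| < |U| + \eps$, which is evidently a misprint for $|I\setminus U| < \eps$ (the standard one-dimensional bound $|I| \le |U+V| - |V|$), and your reading tacitly and correctly uses the latter.
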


\begin{proof}[Proof of Lemma~\ref{lemma:vertical}]
For $x\in\scripta\cap\scriptb$, $|S_{x}| \ge t|A_x|+(1-t)|B_x|$
by the one-dimensional Brunn-Minkowski inequality.
\begin{align*}
\int_{\scripta\cap\scriptb} \Big(|S_x|-&t|A_x|-(1-t)|B_x| \Big)\,dx
\\
 &\le |S| -t|A|-(1-t)|B|
 + t\int_{\reals^{d-1}\setminus\scripta}|A_x|\,dx
\\
& \qquad + (1-t) \int_{\reals^{d-1}\setminus\scriptb}|B_x|\,dx
 +O(|\scriptc\setminus (\scripta\cap\scriptb)|)
\\ &= |S| -t|A|-(1-t)|B| +t|A\setminus \pi^{-1}(\scripta)| +(1-t)|B\setminus \pi^{-1}(\scriptb)|
+ o_{\rho,\eps}(1).
\end{align*}
Thus
\begin{equation} \label{keyintegralbound}
\int_{\scripta\cap\scriptb} \Big(|S_x|-t|A_x|-(1-t)|B_x| \Big)\,dx
\le  \delta + O(\eta^2) + o_{\rho,\eps}(1) \end{equation}
by \eqref{betterbound}.

Define  $\scriptd\subset\scripta\cap\scriptb$ to be the set
\begin{equation}
\scriptd = \set{x\in\scripta\cap\scriptb:  |S_x|-t|A_x|-(1-t)|B_x| \le \eta^{3/2}}.
\end{equation}
By Chebyshev's inequality and the preceding bound for the integral of $|S_x|-t|A_x|-(1-t)|B_x|$,
\begin{equation}
|(\scripta\cap\scriptb)\setminus \scriptd| \le O(\eta^{-3/2}\delta) + O(\eta^{1/2}) + \eta^{-3/2}o_{\rho,\eps}(1).
\end{equation}

The three quantities $\eps,\rho,\eta$ depend on $\delta$ and are required to tend to zero
as $\delta$ tends to zero.
By its construction,
$\lim_{\delta\to 0} \eps(\delta)\to 0$ as $\delta\to 0$. $\rho(\delta)$ is
defined in terms of $\eps(\delta)$ and was chosen above to tend to zero
sufficiently slowly to satisfy \eqref{rhocondition}. $\eta(\delta)$ depends
on the choices of $\eps(\delta),\rho(\delta)$, and 
and has already been required to tend to zero sufficiently slowly to satisfy \eqref{etacondition}.
We now impose on $\eta(\delta)$ the additional requirement that 
\begin{equation} \label{secondetacondition}
\eta(\delta)^{-3/2}\delta +  \eta(\delta)^{-3/2}o_{\rho,\eps}(1)  \to 0 \text{ as } \delta\to 0
\end{equation}
where $o_{\rho,\eps}(1)$ has the same value as in the right-hand side of \eqref{keyintegralbound}.
Two conditions \eqref{etacondition} and \eqref{secondetacondition} are now required of $\eta(\delta)$.
Both require merely that $\eta(\delta)$ tend to zero sufficiently slowly as a function of quantities defined
without reference to $\eta(\delta)$,
namely $\delta,\eps(\delta),\rho(\delta)$ and the quantity denoted by $o_{\rho,\eps}(1)$ above. Therefore there
does exist such a function $\eta$.
\end{proof}

\section{Affine structure} 
Let $A,B,\scriptc,\scriptd,I_x,J_x$ be as above.
At this stage, we know that nearly all sets $A_x$ nearly coincide with intervals $I_x$,
and we know that the set of all $x\in\reals^{d-1}$ for which $|A_x|$
is nonnegligible, nearly coincides with a convex set. But we have no control whatsoever on the 
manner in which the intervals $I_x$ depend on $x$.
The next step is to control this dependence. 
The information in hand is invariant under skew shifts $(x',x_d)\mapsto (x', x_d + v\cdot x')$,
so any conclusions must allow for this same invariance.
The analysis here will be very nearly identical to the corresponding step
in the two-dimensional analysis already given in  \cite{christbmtwo}.
The main ingredient was established in \cite{christyoungest}.

For $x\in\scriptd$ define $\varphi(x),\psi(x)$ to be the centers of $I_x,J_x$ respectively.
\begin{lemma}
Let $d\ge 1$. There exists $\gamma<\infty$ with the following property.
If $\delta$ is sufficiently small then there exist a subset $\scriptd^\sharp\subset\scriptd$, 
an affine function $\phi:\reals^d\to\reals$, and a scalar $v\in\reals^1$, such that 
\begin{equation}
|\scriptd\setminus\scriptd^\sharp|=o_\delta(1)
\end{equation}
and
\begin{equation}
|\varphi(x)-\phi(x)|\le \gamma
\ \text{ and } \ 
|\psi(x)-\phi(x)-v|\le \gamma
\ \text{ for all } \ x\in\scriptd^\sharp.
\end{equation}
\end{lemma}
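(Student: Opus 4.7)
The plan is to derive an approximate linear functional equation satisfied by $\varphi$ and $\psi$ on most of $\scriptd$, and then invoke the characterization of near-solutions of $f(x)+g(y)=h(x+y)$ from \cite{christyoungest}, precisely as in the two-dimensional analysis of \cite{christbmtwo}.

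First, for $x',x''\in\scriptd$ set $y:=tx'+(1-t)x''$. The inclusion $S_y\supset tA_{x'}+(1-t)B_{x''}$ combined with $A_{x'}\subset I_{x'}$ and $B_{x''}\subset J_{x''}$ shows that $S_y$ contains a subset whose symmetric difference with the interval $tI_{x'}+(1-t)J_{x''}$ is $o_\delta(1)$; this interval has center $t\varphi(x')+(1-t)\psi(x'')$ and length $t|A_{x'}|+(1-t)|B_{x''}|+o_\delta(1)$. I would replay the horizontal and vertical structure analyses of the previous two sections with the sumset $S$ in place of $A$ or $B$ (exploiting that the hypotheses \eqref{hypos} force $S$ itself to be a near-extremizer of an auxiliary Brunn-Minkowski inequality), obtaining, outside an exceptional subset of measure $o_\delta(1)$, a measurable center function $\chi:\reals^{d-1}\to\reals^1$ such that $S_y$ is contained in an interval $K_y$ of length $|S_y|+o_\delta(1)$ centered at $\chi(y)$. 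Two intervals that both contain a common set of measure close to their own (nearly equal) lengths must have nearly coincident centers, so
\[
|t\varphi(x')+(1-t)\psi(x'')-\chi(y)|=o_\delta(1)
\]
on a subset of $\scriptd\times\scriptd$ whose complement has measure $o_\delta(1)$.

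Second, after the measure-preserving substitution $(u,w):=(tx',(1-t)x'')$, the displayed relation is an approximate functional equation of the form $F(u)+G(w)=H(u+w)$ for three functions defined on subsets of $\reals^{d-1}$ of measure $\asymp 1$. The theorem of \cite{christyoungest} characterizing near-solutions of this equation identifies each of $F,G,H$ as nearly affine on a subset of nearly full measure: there exist an affine function $\phi:\reals^{d-1}\to\reals$ and scalars $c_1,c_2\in\reals$ such that
\[
|\varphi(x)-\phi(x)-c_1|=o_\delta(1) \text{ and } |\psi(x)-\phi(x)-c_2|=o_\delta(1)
\]
on a common subset $\scriptd^\sharp\subset\scriptd$ satisfying $|\scriptd\setminus\scriptd^\sharp|=o_\delta(1)$. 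Absorbing $c_1$ into $\phi$ and setting $v:=c_2-c_1$ gives the required conclusion: once $\delta$ is small enough, the $o_\delta(1)$ errors are bounded by any preassigned $\gamma>0$ on $\scriptd^\sharp$.

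The main obstacle is parameter bookkeeping rather than a new idea. One must verify that the approximate functional equation holds with error small enough and on a set large enough to satisfy the quantitative hypotheses of the near-solution theorem from \cite{christyoungest}. This hinges on the strong bound \eqref{turningpoint} --- in particular on the fact that $|S_x|-t|A_x|-(1-t)|B_x|$ is $o(\eta(\delta))$ rather than merely $O(\eta(\delta))$ --- together with the improved inverse-projection estimate \eqref{betterbound}, which is precisely what guarantees that the lengths of $tI_{x'}+(1-t)J_{x''}$ and of $K_y$ match to within $o(1)$ on a sufficiently large sub-product. Since the parallel argument in \cite{christbmtwo} was explicitly noted to transfer with no essential change, the only verification specific to higher dimension is that the domain $\reals^{d-1}$ replaces $\reals^1$; this is transparent to the functional-equation input of \cite{christyoungest}, which accepts arguments in Euclidean spaces of any dimension.
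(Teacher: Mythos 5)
Your proposal is broadly in the spirit of the paper's argument (reduce to an approximate functional equation $F(u)+G(w)\approx H(u+w)$, then invoke the near-solution theorem from \cite{christyoungest}), but the way you build the functional equation contains a genuine gap, and you omit a nontrivial extension step that the paper needs.

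The gap is in the construction of the "center function" for $S$. You propose to replay the horizontal and vertical analyses with $S$ in place of $A$ or $B$ to produce a function $\chi(y)$ such that $S_y$ nearly coincides with an interval $K_y$ of length $\approx|S_y|$ centered at $\chi(y)$, and then to match $\chi(y)$ against $t\varphi(x')+(1-t)\psi(x'')$ by a "two intervals containing nearly all of each other have close centers" argument. There are two problems. First, it is not available that $S$ itself is a near-extremizer for a one-parameter Brunn-Minkowski inequality of the sort that would let you apply Proposition~\ref{prop:1Dnearequality} to the fibers $S_y$: the hypotheses control $|tA+(1-t)B|$ relative to $|A|$ and $|B|$, not any sumset built out of $S$ alone, so concluding $S_y$ is nearly an interval requires an argument you have not supplied. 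Second, and more seriously, even granting an interval $K_y\supset S_y$ of length $\approx|S_y|$, your center-matching step needs $|tI_{x'}+(1-t)J_{x''}| = t|I_{x'}|+(1-t)|J_{x''}|$ to be close to $|K_y|\approx|S_y|$. The bound \eqref{turningpoint} only guarantees $|S_y|-t|A_y|-(1-t)|B_y|\le\eta^{3/2}$ on $\scriptd$ --- a statement about the \emph{diagonal} decomposition $y=ty+(1-t)y$. For an off-diagonal pair $(x',x'')$ with $tx'+(1-t)x''=y$ there is no reason that $t|A_{x'}|+(1-t)|B_{x''}|$ should be close to $|S_y|$; the summand interval can be much shorter than $K_y$, and a short interval inside a long one carries no information about the long one's center.

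The paper avoids both issues with a different device: it never characterizes $S_y$ directly. Instead it sets $\zeta(y):=t\varphi(y)+(1-t)\psi(y)$ (a combination of the already-constructed center functions for $A_y$ and $B_y$, so no new structural input about $S$ is needed) and derives the approximate functional equation $t\varphi(x')+(1-t)\psi(x'')\approx \zeta(tx'+(1-t)x'')$ by a disjointness argument: $S_z$ contains both $tA_z+(1-t)B_z$ and $tA_{x'}+(1-t)B_{x''}$; if the enveloping intervals had centers more than $\gamma$ apart they would be disjoint, forcing $|S_z|\ge t|A_z|+(1-t)|B_z|+\eta$; integrating against the bound \eqref{keyintegralbound} shows this happens only on a set of measure $o_\delta(1)$. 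Here the diagonal decomposition is used as the anchor, so the quantity that needs to be nonnegligible is $t|A_{x'}|+(1-t)|B_{x''}|\ge\eta$, which \emph{is} guaranteed on $\scriptd$, rather than any match between $|S_y|$ and an off-diagonal combination. Finally, after obtaining the affine approximation via \cite{christyoungest}, the paper must extend the conclusion from an inscribed ball $\scripte$ (to which Lemma~6.6 of \cite{christyoungest} applies) to all of the convex set $\scriptc$, via an iteration $V_0=\scripte$, $V_{k+1}=\scriptc\cap(t^{-1}V_k-t^{-1}(1-t)\scripte)\cap((1-t)^{-1}V_k-(1-t)^{-1}t\scripte)$ reaching $\scriptc$ in finitely many steps; your proposal does not address this and should.
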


\begin{proof}
Suppose that $x_1,x_2,y_1,y_2$ all belong to $\scriptd$ and $tx_1+(1-t)y_1 = tx_2+(1-t)y_2=z\in\reals^{d-1}$.
The intervals $I_{x_j}$, $J_{x_j}$ have lengths $O(1)$. 
$S_z$ contains $tA_{x_j}+(1-t)B_{y_j}$ for $j=1,2$. These are subsets of the intervals $tI_{x_j}+(1-t)J_{y_j}$
respectively, which have uniformly bounded lengths and centers $c_j=t\varphi(x_j)+(1-t)\psi(y_j)$.
There exists an absolute constant $\gamma\in\reals^+$ such that if $|c_1-c_2|>\gamma$,
then \[(tI_{x_1}+(1-t)J_{y_1}) \cap (tI_{x_2}+(1-t)J_{y_2})=\emptyset.\]
In that case,
\begin{equation} \label{eq:twodisjointintervals}
|S_z|\ge t|I_{x_1}|+(1-t)|J_{y_1}| + t|I_{x_2}| + (1-t)|J_{y_2}|.
\end{equation}

We define a set $\scriptd'\subset\scriptd$ by defining its complement $\scriptd\setminus \scriptd'$ to be
the set of all $z\in \scriptd$ for which there exist $x',x''\in\scriptd$
satisfying $tx'+(1-t)x''=x$ such that 
\[\big|\big(|t\varphi(x')+(1-t)\psi(x'')\big) -\big(|t\varphi(x)+(1-t)\psi(x)\big)\big|>\gamma.\]

For $x\in\scriptd\setminus\scriptd'$, there exist $x',x''\in\scriptd$ such that 
\[x=tx'+(1-t)x'' \ \text{ and } \ 
(tI_{x}+(1-t)J_{x}) \cap (tI_{x'}+(1-t)J_{x''})=\emptyset.\]
Since  $x',x''\in\scriptd\subset\scripta\cap\scriptb=\scripta(\bar s+\eps)\cap\scriptb(\bar s)$
and $\bar s\ge\eta$, $|A_{x'}|\ge\eta$ and $|B_{x''}|\ge\eta$. Consequently
\begin{align*}
|tI_{x'}+(1-t)J_{x''}| &\ge t|I_{x'}|+(1-t)|J_{x''}|
\\
&\ge t|A_{x'}|+(1-t)|B_{x''}|
\\
&\ge t\eta +(1-t)\eta
\\
&=\eta,
\end{align*}
and the disjointness of these two intervals implies the lower bound
\[ |S_x|\ge t|A_x|+(1-t)|B_x|+\eta.  \]
Integrating gives
\[
\int_{\scriptd\setminus\scriptd'} \big( |S_x|- t|A_x|-(1-t)|B_x|\big)\,dx \ge \eta|\scriptd\setminus\scriptd'|.
\]
Since $\scriptd\subset\scripta\cap\scriptb$, a favorable bound for the integral has already been established:
\begin{align*}
|\scriptd\setminus\scriptd'| 
&\le \eta^{-1} \int_{\scripta\cap\scriptb} \big( |S_x|- t|A_x|-(1-t)|B_x|\big)\,dx
\\
&\le \eta^{-1}(O(\delta)+O(\eta^2)+o_\rho(1))
\\
&= O(\eta^{-1}\delta)+O(\eta)+\eta^{-1}o_\rho(1).
\end{align*}
$\eta=\eta(\delta)$ has been chosen so that the right-hand side in this inequality is $o_\delta(1)$.

The next step is to produce the required affine function $\phi$. 
This is done in the same way as the corresponding step in \cite{christbmtwo},
with the small additional complication that $\scriptc$ is here a general convex set,
while in \cite{christbmtwo} it was an interval.

For $x\in\scriptd'$, define $\zeta(x)\in\reals^1$ by $\zeta(x) = t\varphi(x)+(1-t)\psi(x)$.
Then $|\scriptc\setminus\scriptd'|=o_\delta(1)$,
and the set of all $(x',x'')\in \scriptd'\times\scriptd'$ such that $tx'+(1-t)x''\notin\scriptd'$
has measure $o_\delta(1)$. Finally,
for all $x',x''\in\scriptd'$ such that $tx'+(1-t)x''\in\scriptd'$,
\[ |t\varphi(x')+(1-t)\psi(x'')-\zeta(tx'+(1-t)x'')|\le \gamma.  \]
Since $\scriptc$ is convex, there exists an ellipsoid contained in $\scriptc$, whose Lebesgue
measure is comparable to the measure of $\scriptc$.
Make a measure-preserving affine change of variables, which transforms that ellipsoid into a ball,
denoted by $\scripte$.
By Lemma~6.6 of \cite{christyoungest},
there exist an affine mapping $\phi:\reals^{d-1}\to\reals$ and a scalar $v\in\reals^{1}$ such that
\begin{align*} &|\varphi(x)-\phi(x)|\le C\gamma \ \text{ for all $x\in\scriptd''$}
\\ &|\psi(x)-\phi(x)-v|\le C\gamma \ \text{ for all $x\in\scriptd''$} \end{align*}
where
\begin{equation*} |\scripte\setminus\scriptd''|=o_\delta(1).  \end{equation*}
This is the desired conclusion, except that it has been proved only for a large subset
of $\scripte$, rather than a large subset of $\scriptd$.

We may translate so that $\scripte=B(0,r)$, the ball centered at $0\in\reals^{d-1}$
of radius $r$, where $r\asymp 1$. Then $\scriptc\subset B(0,Cr)$ where $C\in\reals^+$
depends only on the dimension $d$.
By composing with the measure-preserving affine automorphism of $\reals^d=\reals^{d-1}\times\reals^1$
defined by $(y;u)\mapsto (y;u-\phi(x))$, and by translating $B$ by $(0;v)$,
we may reduce to the case in which $\phi$ is the identity mapping, and $v=0$.
Extend $\varphi,\psi,\zeta$ in an arbitrary way to measurable functions
from $\scriptc\subset\reals^{d-1}$ to $\reals^1$.
Then
$|t\varphi(x')+(1-t)\psi(x'')-\zeta(tx'+(1-t)x'')|\le \gamma$ for all $(x',x'')\in\scriptc^2$
except for a set of Lebesgue measure $o_\delta(1)$,
and $\varphi,\psi,\zeta=O(1)$ at every point of on $B(0,r)$, with the exception of a set of points having
measure $o_\delta(1)$.

From this it follows that $\varphi,\psi,\zeta=O(1)$ at each point of $\scriptc$, with the exception
of a set of points $\scriptd^\natural$ having measure $o_\delta(1)$.
Define sets $V_k\subset\reals^{d-1}$ by setting $V_0=B(0,r)$
and
\[ V_{k+1} = \scriptc\cap (t^{-1}V_k-t^{-1}(1-t)B(0,r))\cap ((1-t)^{-1}V_k-(1-t)^{-1}tB(0,r)).\]
From the facts that $B(0,r)\subset\scriptc$, $\scriptc$ is convex, $r\gtrsim 1$, and $|\scriptc|=O(1)$,
it follows easily that there exists a positive integer $N$, depending only on the dimension $d$
and on the compact subset $\Lambda\subset(0,1)$ in which $t$ is assumed to lie,
such that $V_N=\scriptc$.

Suppose we knew that \[t\varphi(x')+(1-t)\psi(x'')\equiv \zeta(tx'+(1-t)x'')\] for all $x',x''\in\scriptc$,
and that $\varphi,\psi,\zeta=O(1)$ on $V_k$.
We could then 
conclude that $\varphi(x')=O(1)$ for all $x'\in \scriptc$ for which there exists $x''\in\scriptc$
such that $tx'+(1-t)x''\in V_k$ and $x''\in B(0,r)$.
By writing \[x' = t^{-1}(tx'+(1-t)x'') - t^{-1}(1-t)x'',\]
and noting that $V_k,B(0,r)$ are convex sets which contain $0$, 
we see that for any point $x'\in t^{-1}V_k-t^{-1}(1-t)B(0,r)$ there exists such an $x''$.
Thus provided that $x'\in\scriptc$, $\varphi(x')=O(1)$. By the same reasoning, 
\[
\psi(x'')=O(1) \text{ for all  }
x''\in \scriptc\cap \big((1-t)^{-1}V_k-(1-t)^{-1}tB(0,r)\big),\] so $\varphi,\psi=O(1)$ on $V_{k+1}$.
Since $V_{k+1}$ is convex, the same conclusion then follows for $\zeta$ via the
approximate functional equation for $\zeta(tx'+(1-t)x'')$.
This argument can be iterated $N$ times to yield the conclusion for $V_N=\scriptc$.

It is elementary to verify that this same procedure applies in the context at hand, in which
the approximate functional equation is known to hold for the vast majority of all pairs of points,
in the sense of Lebesgue measure. Details are omitted.
\end{proof}

\section{Conclusion of proof}

The remainder of the proof of Theorem~\ref{thm:BMadd} follows the reasoning already given for
the case $d=2$ in \cite{christbmtwo}, steps 8,9,10, with no essential changes. 
The main points are (i) that the above reasoning can and should be applied to
all simultaneous rotations of $A,B$, 
and (ii) that the information thus obtained makes possible a compactness argument,
which, when coupled with the known fact that exact equality can hold only for homothetic pairs of 
convex sets, yields the conclusion of the theorem.

\begin{thebibliography}{20}

\bibitem{burchard} A.~Burchard,
{\em Cases of equality in the Riesz rearrangement inequality}, 
Ann. of Math. (2) 143 (1996), no. 3, 499--527

\bibitem{youngdiscrete}
M.~Charalambides and M.~Christ,
{\em Near--extremizers for Young's inequality for discrete groups},
preprint, math.CA arXiv:1112.3716

\bibitem{christradon} M.~Christ, 
{\em Extremizers of a Radon transform inequality}, preprint  math.CA arXiv:1106.0719,
to appear in proceedings of conference in honor of E.~M.~Stein 

\bibitem{christrieszsobolev} \bysame
{\em An approximate inverse Riesz-Sobolev rearrangement inequality}, preprint,
math.CA arXiv:1112.3715

\bibitem{christyoungest} \bysame
{\em On near-extremizers for Young's inequality for $\reals^d$}, preprint,
math.CA arXiv:1112.4875

\bibitem{christbmtwo} \bysame
{\em Near equality in the two-dimensional Brunn-Minkowski inequality},
math.CA arXiv:1206.1965

\bibitem{gardner}
R.~J.~Gardner, {\em The Brunn-Minkowski inequality}, 
Bull. Amer. Math. Soc. (N.S.) 39 (2002), no. 3, 355--405.

\bibitem{HO}
H.~Hadwiger and D.~Ohmann, 
{\em Brunn-Minkowskischer Satz und Isoperimetrie},  
Math. Z. 66 (1956), 1--8

\bibitem{henstock}
R.~Henstock and A.~M.~ Macbeath, 
{\em On the measure of sum-sets. I. The theorems of Brunn, Minkowski, and Lusternik},
Proc. London Math. Soc. (3) 3, (1953), 182--194



\bibitem{liebloss}
E.~H.~Lieb and M.~Loss, 
{\em Analysis}, Amer. Math. Soc., Providence, RI, 1997

\bibitem{riesz}
F.~Riesz,
{\em Sur une in\'egalit\'e int\'egrale}, Journal London Math. Soc. 5 (1930), 162--168



\bibitem{sobolev}
S.~L.~Sobolev,
{\em On a theorem of functional analysis}, Mat. Sb. (N.S.) 4 (1938), 471--479,
A. M. S. Transl.  Ser. 2, 34 (1963), 39-68

\bibitem{taovu}
T.~Tao and V.~Vu,
{\em Additive Combinatorics},
Cambridge Studies in Advanced Mathematics, 105. Cambridge University Press, Cambridge, 2006

\end{thebibliography}
\end{document}